\theoremstyle{plain}
\newtheorem{theorem}{Theorem}[section]
\newtheorem{corollary}[theorem]{Corollary}
\newtheorem{lemma}[theorem]{Lemma}
\newtheorem{proposition}[theorem]{Proposition}
\theoremstyle{definition}
\theoremstyle{remark} \newtheorem{remark}[theorem]{Remark}
\newtheorem{example}[theorem]{Example}
  \def\cC{\mathcal C}
\def\cH{\mathcal H}  
\def\cV{\mathcal{V}_{\varepsilon}^3}
\newcommand{\tr}{\mathrm{Tr}} \newcommand{\PG}{\mathrm{PG}}
\newcommand{\AG}{\mathrm{AG}}
\newcommand{\FF}{\mathbb{F}}
\newcommand{\PGL}[2]{\mathrm{PGL}_{#1}(#2)}
\newcommand{\GF}[1]{\mathbb{F}_{#1}}
\begin{document}

\title{ On a class of quasi-Hermitian surfaces in even characteristic }
\author[A. Aguglia]{Angela Aguglia}
\address{Dipartimento di Meccanica, Matematica e Management, Politecnico di Bari (IT)}
\email{angela.aguglia@poliba.it}
\author[A. Montinaro]{Alessandro Montinaro} 
\address{Dipartimento di Matematica e Fisica "Ennio De Giorgi", Università del Salento (IT)}
\email{alessandro.montinaro@unisalento.it}

\date{}

\begin{abstract}

In \cite{A}, a new quasi-Hermitian variety $\mathcal{H}_\varepsilon^r$ in $\mathrm{PG}(r, q^2)$, with $q = 2^e$ and $e \geq 3$ an odd integer, was constructed. The variety depends on a primitive element $\varepsilon$ of the underlying field $\mathrm{GF}(q^2)$.

In the present paper, we first provide a classification of such varieties up to projective equivalence in finite projective spaces of arbitrary dimension. Then, we focus on the case $r = 3$ and study the structure of the lines contained in $\mathcal{H}_\varepsilon^3$; as a consequence, we determine the full automorphism group of $\mathcal{H}_\varepsilon^3$ .
Finally, as a byproduct, we prove the equivalence of certain minimal codes introduced in \cite{ALGS}.
\end{abstract}

\keywords{Quasi-Hermitian variety; collineation group; projective equivalence.}

\maketitle

\section{Introduction}

In finite geometry, two-character sets have attracted considerable attention due to their distinctive combinatorial properties and wide-ranging applications, including coding theory, strongly regular graphs, association schemes, optimal multiple coverings, and secret sharing schemes (see, for example, \cite{CK, CI, DD}).

A two-character set $S$ in the Desarguesian projective space 
$\PG(r,q)$ 
is a set of points such that every hyperplane intersects 
$S$
in either 
$n_1$ or 
$n_2$ points, for two fixed integers 
$n_1$
and $n_2$.
The geometric construction of such sets often involves advanced techniques, including commuting polarities, symplectic polarities,  normal line-spreads as well as quasi-polar spaces, (see, for example,  \cite{CMPS}, \cite{DHOP},\cite{DS}).

In this paper, we focus on a particular class of two-character sets arising from quasi-Hermitian varieties in $\PG(r,q^2)$.
A quasi-Hermitian variety in $\PG(r,q^2)$ is a point set that shares the same intersection numbers with hyperplanes as the non-singular Hermitian variety 
 $\cH(r,q^2)$ of $\PG(r,q^2)$.
More precisely,  the intersection numbers of a quasi-Hermitian variety with hyperplanes are given by:
\[| \mathcal{H}(r-1, q^2) |= \frac{(q^r + (-1)^{r-1})(q^{r-1}-(-1)^{r-1})}{
q^2-1} ,\]
and
\[|\Pi_0 \mathcal{H}(r-2, q^2)|= \frac{(q^r + (-1)^{r-1})(q^{r-1}-(-1)^{r-1})}{q^2-1} +
(-1)^{r-1}q^{r-1},\]
where $\Pi_i$ denotes an $i$-dimensional space of $\PG(r, q^2)$ and $\Pi_0\mathcal{H}(r-2, q^2)$ is a cone, the join of
the vertex $\Pi_0$ to a non-singular Hermitian variety $\mathcal{H}(r-2, q^2)$ of a projective subspace $\Pi_{r-2}$
disjoint from $\Pi_0$. 

Since there are only two possible intersection sizes with hyperplanes, a quasi-Hermitian variety is a two-character set.

By definition, every non-singular Hermitian variety in $\PG(r, q^2)$ is a quasi-Hermitian variety referred to as the classical quasi-Hermitian variety. 
The total number of points in a non-singular Hermitian variety 
of $\PG(r,q^2)$ is
\[| \mathcal{H}(r, q^2) |= \frac{(q^{r+1} + (-1)^r)(q^r-(-1)^r)}
{(q^2-1)}. \]

For $r>2$ this coincides with the number of points of any quasi-Hermitian variety in $\PG(r, q^2)$, see~\cite{SV}.
In the Desarguesian projective plane $\PG(2,q^2)$ a set of points having the same size (that is, $q^3+1$) and the same intersection numbers ($1$ and $q+1$) with respect to lines as the Hermitian curve  is defined to be a unital. However, a set in $\PG(2,q^2)$ with  intersection numbers  $1$ or $q+1$   has size either $q^3 + 1$ or $q^2 + q + 1$, corresponding respectively to a unital or a Baer subplane of $\PG(2, q^2)$.
Non-classical quasi-Hermitian varieties do exist, as shown in \cite{A,ACK,DS, LLP,FP}.

In 1976, F. Buekenhout introduced a general construction for unitals in finite translation planes of square order~\cite{B}. This construction led to the discovery of non-classical unitals in $\PG(2, q^2)$ for certain values of $q$. Specifically, when $q \geq 8$ is an odd power of $2$, the Bruck–Bose representation of $\PG(2, q^2)$ in $\PG(4,q)$ allows the construction of a unital from an ovoidal cone in $\PG(4, q)$ with vertex a point $V$ and base a Suzuki–Tits ovoid. Such a unital is known as a Buekenhout–Tits unital, or BT unital.

In~\cite{A}, a new family of non-classical quasi-Hermitian varieties in $\PG(r, q^2)$ is introduced. These varieties, called BT quasi-Hermitian varieties, can be viewed as a natural generalization of BT-unitals to higher dimensions, since for $r=2$ they coincide with the BT-unitals, (for more details, see Section \ref{prelim}).

  All BT unitals are equivalent under the action of $P\Gamma L(3,q^2)$ as proven in \cite{FG},
 resolving an open problem posed by Barwick and Ebert in \cite{BE}. In the same article, the authors explicitly describe the stabilizer in $P\Gamma L(3,q^2)$ of a BT-unital.

In this paper, we provide a classification up to projective equivalence in arbitrary-dimensional finite projective spaces. Our results show that, in this respect, BT quasi-Hermitian varieties behave analogously to BT unitals in the projective plane: they are all projectively equivalent. Then, we investigate the structure of BT quasi-Hermitian varieties in $\PG(3,q^2)$ and determine their full collineation group.

Finally, as a byproduct of our analysis, we establish the equivalence of certain minimal codes introduced in \cite{ALGS}.

Our long-term goal is to develop a group-theoretical characterization of BT quasi-Hermitian varieties, in order to distinguish them from all other quasi-Hermitian varieties in projective spaces of the same dimension and order. 
In this direction, Lemma~\ref{SylHigher},
which states that any Sylow $p$-subgroup of the collineation group of a BT quasi-Hermitian variety fixes a unique incident point-hyperplane pair of $PG(r,q^{2})$ (any $r>2$),
appears to be a promising starting point.

\section{Preliminaries}\label{prelim}

In \cite{A}, the first author introduced a new  family of  non-classical quasi-Hermitian varieties which are linked to the Buekenhout-Tits unitals. The construction is as follows.

Let consider the projective space $\PG(r,q^2)$ of order $q=2^e$, $e$ an odd number, the homogeneous coordinates of its points and denote by $$(X_0, X_1, \ldots, X_r).$$ Let $$\Pi_{\infty}:=\{(X_0, X_1,\ldots, X_r)\in \PG(r,q^2)| \, X_0=0 \}$$ be the hyperplane of the 'points at infinity'. Then $\AG(r,q^2)$ is the affine space  $\PG(r,q^2)\setminus \Pi_{\infty}$ with points having coordinates $(x_1,\ldots,x_r)$ where $x_i=X_i/X_0$ for $i=1,\ldots,r$.

The the trace and norm functions of $\GF{q^2}$ over $\GF{q}$ are
\[
\tr_{q^2/q}(x):=x+x^q,\qquad \mathrm{N}_{q^2/q}(x):=x^{q+1}\qquad \text{ for all }x\in\GF{q^2},
\]
respectively, and $\sigma:x\rightarrow x^{2^{\frac{e+1}{2}}}$ is an automorphism of $\GF{q}$, being $q=2^e$ with $e$ odd.

Let $\varepsilon\in \GF{q^2}\setminus \GF{q}$ be such that $\varepsilon^2+\varepsilon+\delta=0$ for some $\delta \in \GF{q}\setminus \{1\}$, $\tr_{q/2}(\delta)=1$. It is straightforward to see that   $\tr_{q^2/q}(\varepsilon)=1$. The pair $(1, \varepsilon)$ is a basis of $\GF{q^2}$ over $\GF{q}$, and hence $a=a_0+a_1\varepsilon$ with $a_0, a_1$ suitable elements in $\GF{q}$ for each $a \in \GF{q^2}$.

Let  $\mathcal{V}^r_\varepsilon$ be the variety with affine equation:
\begin{equation}\label{eq:V}
x_r^q+x_r=\Gamma_{\varepsilon}(x_1)+\ldots+\Gamma_{\varepsilon}(x_{r-1})\text{,}
\end{equation}
where $$\Gamma_{\varepsilon}(x)=[x+(x^q+x)\varepsilon]^{\sigma+2}+(x^q+x)^{\sigma}+(x^{2q}+x^2)\varepsilon+x^{q+1}+x^2$$
and consider the Hermitian cone
\begin{equation}\label{eq:Hinf}
\mathcal{\cH}^r_{\varepsilon,\infty}:=\{(0,X_1,\ldots,X_r)|\quad X_1^{q+1}+\ldots+X_{r-1}^{q+1}=0\}.
\end{equation}
Let us define

\begin{equation}\label{eq:H}
\mathcal{H}^r_\varepsilon:=(\mathcal{V}_{\varepsilon}^r\setminus \Pi_{\infty})\cup \mathcal{\cH}^r_{\varepsilon,\infty}.
\end{equation}
It is shown in \cite{A} that, $\mathcal{H}^r_\varepsilon$ is a non-classical quasi-Hermitian variety of $\PG(r,q^2)$ and a Buekenhout-Tits unital for $r=2$.
We call $\mathcal{H}^r_\varepsilon$ a \emph{Buekenhout-Tits} or \emph{BT quasi-Hermitian variety} of $\PG(r,q^2)$ for $r\geq 2$.


Let $M_1$ and $M_2$ be the number of hyperplanes of $PG(r,q^{2})$ intersecting
the BT-variety $\mathcal{H}^3_\varepsilon$ in $$N_1=\frac{\left( q^{r}-(-1)^{r}\right) \left(
q^{r-1}-(-1)^{r-1}\right) }{q^{2}-1}$$ or $$N_2=\frac{\left(
q^{r}-(-1)^{r}\right) \left( q^{r-1}-(-1)^{r-1}\right) }{q^{2}-1}%
+(-1)^{r-1}q^{r-1}$$ points, respectively. Since $\mathcal{H}^r_\varepsilon$ has the same size,  as well as the intersection numbers with the hyperplanes, of a non-degenerate Hermitian variety $\mathcal{H}(r,q^2)$, it follows that 
$$
M_{1}=\frac{\left( q^{r+1}-(-1)^{r+1}\right) \left( q^{r+1}+(-1)^{r+1}-\left(
q^{2}-1\right) ^{2}\right) }{\left( q^{2}-1\right) }
$$
and $$
M_{2}=\left(
q^{2}-1\right) \left( q^{r+1}-(-1)^{r+1}\right) \text{.} 
$$
 In particular, $M_2\equiv 1  \pmod{p}$. Therefore, any Sylow $p$%
-subgroup of $Aut(\mathcal{H}^3_{\varepsilon})$, the full automorphism group of $\mathcal{H}^3_{\varepsilon}$, preserves at least a hyperplane of $PG(r,q^{2})$ among the $X_2$-secant ones. 

As pointed out in \cite[Remark at pag. 36]%
{A}, the elementary abelian $p$-group of order $q^{r}$ defined by $$E=\left\langle \tau_{\gamma_{1},\gamma_{2}, \gamma_{3}} :\gamma _{i}\in \mathbb{F}_{q},i=1,...,r\right\rangle\text{,}$$ where $\tau_{\gamma_{1},\gamma_{2}, \gamma_{3}}$ is the collineation of $PGL_{4}(q^{2})$ represented by the matrix 
\[
\left( 
\begin{array}{cccccc}
1 & \gamma _{1}\varepsilon  & \gamma _{2}\varepsilon  & \cdots  & \gamma
_{r-1}\varepsilon  & \gamma _{r}+(\gamma _{1}+\cdots +\gamma _{r-1})^{\sigma
}\varepsilon  \\ 
0 & 1 & 0 & \cdots  & 0 & \gamma _{1}+\gamma _{1}\varepsilon  \\ 
0 & 0 & 1 & \cdots  & 0 & \gamma _{2}+\gamma _{2}\varepsilon  \\ 
\vdots  & \vdots  & \vdots  & \ddots  & \vdots  & \vdots  \\ 
0 & 0 & 0 & \cdots  & 1 & \gamma _{r-1}+\gamma _{r-1}\varepsilon  \\ 
0 & 0 & 0 & \cdots  & 0 & 1%
\end{array}%
\right)\text{,} 
\]%
is a subgroup $Aut(\mathcal{H}^r_{\varepsilon})$. It is easy to see that $\Pi _{\infty }:X_{0}=0$, and its point $P_{\infty }(0,0,0,\ldots,0,1)$, are the unique $E$-invariant hyperplane and point of $PG(r,q^{2})$, respectively. Moreover, the following holds:

\begin{lemma}\label{SylHigher}
Any Sylow $p$-subgroup of $Aut(\mathcal{H}^r_{\varepsilon})$ fixes a unique incident point-hyperplane pair of $PG(r,q^{2})$. 
\end{lemma}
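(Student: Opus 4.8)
The plan is to obtain \emph{existence} of an $S$-invariant incident point-hyperplane pair from the congruence $M_2\equiv 1\pmod p$ together with a general fact on $p$-groups of collineations, and to obtain \emph{uniqueness} by a Sylow-conjugacy reduction to the explicit elementary abelian group $E$ described above.

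For existence, let $S$ be a Sylow $p$-subgroup of $Aut(\mathcal{H}^r_\varepsilon)$. First I would observe that $S$ permutes the $M_2$ hyperplanes meeting $\mathcal{H}^r_\varepsilon$ in $N_2$ points; since every non-trivial $S$-orbit has $p$-power length and $M_2\equiv 1\pmod p$, the number of $S$-invariant such hyperplanes is $\equiv 1\pmod p$, so there is at least one, say $\Pi$. The group $S$ induces on $\Pi\cong\PG(r-1,q^2)$ a $p$-group of collineations, and because $p\mid q^2$ such a group always fixes a point (it lies in some Sylow $p$-subgroup of $P\Gamma L_r(q^2)$, and a Sylow $p$-subgroup there is conjugate to the stabiliser of the standard flag, hence fixes a point); lifting this fixed point $P\in\Pi$ to $\PG(r,q^2)$ yields an incident point-hyperplane pair $(P,\Pi)$ fixed by $S$. (Alternatively, granting from \cite{A} that $\Pi\cap\mathcal{H}^r_\varepsilon$ is the cone $\Pi_0\mathcal{H}(r-2,q^2)$, one may take $P$ to be its vertex, which is intrinsically defined because $r>2$ forces $\mathcal{H}(r-2,q^2)$ to be non-degenerate of positive dimension.)

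For uniqueness, the crucial point is that the statement is conjugation-invariant: if a group fixes a unique incident pair, each of its conjugates fixes a unique incident pair, namely the transported one. Since all Sylow $p$-subgroups of $Aut(\mathcal{H}^r_\varepsilon)$ are conjugate, it is enough to prove the lemma for a single one. Now $E$ is a $p$-subgroup of $Aut(\mathcal{H}^r_\varepsilon)$ (of order $q^r=p^{er}$), so it is contained in some Sylow $p$-subgroup $S_0$. Every point, resp.\ hyperplane, of $\PG(r,q^2)$ fixed by all of $S_0$ is a fortiori fixed by all of $E$; as $P_\infty$ and $\Pi_\infty$ are the \emph{only} $E$-invariant point and hyperplane, $S_0$ fixes no point other than $P_\infty$ and no hyperplane other than $\Pi_\infty$. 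Applying the existence step to $S_0$ then forces the pair produced there to be $(P_\infty,\Pi_\infty)$, and it is the unique incident pair fixed by $S_0$. Conjugating, every Sylow $p$-subgroup fixes exactly one incident point-hyperplane pair.

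I expect the only genuinely non-routine ingredients to be: (i) the fact that a $p$-group of collineations of a projective space over a field of characteristic $p$ fixes a point, which is what allows one to promote an $S$-invariant hyperplane to an $S$-invariant incident pair without invoking a structural description of the $N_2$-secant hyperplane sections; and (ii) the ``reduce to $E$'' step, whose validity rests precisely on the previously noted uniqueness of the $E$-invariant point and hyperplane. The congruence $M_2\equiv 1\pmod p$ and the Sylow theorems supply the rest.
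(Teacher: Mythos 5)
Your proposal is correct and follows essentially the same route as the paper: existence of a fixed secant hyperplane from $M_2\equiv 1\pmod p$, identification of the fixed pair with $(P_\infty,\Pi_\infty)$ by reducing to a Sylow subgroup containing $E$ and using the uniqueness of the $E$-invariant point and hyperplane, and then Sylow conjugacy. The only (immaterial) difference is that you locate the fixed point via the induced $p$-group on the fixed hyperplane, whereas the paper simply counts that $\frac{q^{2(r+1)}-1}{q^2-1}\equiv 1\pmod p$ forces a fixed point of $\PG(r,q^2)$, which must then be $P_\infty\in\Pi_\infty$.
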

\begin{proof}
Let $U$ be any Sylow $p$-subgroup of $Aut(\mathcal{H}^r_{\varepsilon})$ containing $E$ and let $\Pi _{U}$ be a $U$-invariant hyperplane secant to $\mathcal{H}^r_{\varepsilon}$ since $M_2\equiv 1  \pmod{p}$. Then $\Pi _{U}=\Pi _{\infty }$ since $\Pi _{\infty }$ is the unique $E$-invariant hyperplane of $PG(r,q^{2})$. Moreover, $U$ fixes at least one of the $\frac{q^{2(r+1)}-1}{q^{2}-1}$ points of $PG(r,q^{2})$, which necessarily coincide with $P_{\infty }$ since this one is the unique point of $PG(r,q^{2})$ fixed by $E$ with $E \leq U$. Therefore, $P_{\infty }(0,0,0,1)$ and $\Pi _{\infty }:X_{0}=0$ are the unique $U$-invariant
point and hyperplane of $PG(r,q^{2})$. The assertion now follows from fact that all Sylow $p$-subgroups of $Aut(\mathcal{H}^r_{\varepsilon})$ lies in a unique conjugacy class.   
\end{proof}

\section{Projective Equivalence of $\cH_{\varepsilon}^r$}
We are going to prove that the BT quasi-Hermitian varieties $\cH_{\varepsilon}^r$ in
$\PG(r, q^2)$, $r\geq 2$
 are projectively equivalent.

\begin{theorem} \label{teofi}
Let $\varepsilon _{1},\varepsilon _{2}\in \mathbb{F}_{q^{2}}$ be such that $%
\varepsilon _{i}^{2}+\varepsilon _{i}=\delta _{i}$ with $T(\delta _{i})=1$, $%
i=1,2$. Assume that $\alpha =\varepsilon _{2}-\varepsilon _{1}$, $B=(\frac{\alpha}{\delta_1})^{\sigma/2}$, $\rho =\left(
\frac{\delta _{1}}{\delta _{2}}\right) ^{\sigma /2+1}$.  Then, the projectivity
$\xi $ represented by the matrix of order $r+1$
\[\left(
\begin{array}{cccccc}
1 &  \ B \rho \frac{ \varepsilon _{2}}{\varepsilon _{1}} & \
B \rho \frac{ \varepsilon _{2}}{\varepsilon _{1}} &\cdots&  \ B\rho \frac{\varepsilon _{2}}{\varepsilon _{1}} & \ \ (r-1)\rho^2B^{\sigma+2}\frac{\varepsilon_2^{q+2
}}{\varepsilon_1^{q+1}} \\
0 & \frac{\rho \varepsilon _{2}}{\varepsilon _{1}} & 0& \cdots  & 0  & B \rho^{2}\frac{\varepsilon _{2}^{q+1}}{\varepsilon _{1}^{q+1}} \\
0 & 0 & \frac{\rho \varepsilon _{2}}{\varepsilon _{1}}& \cdots &0 &B \rho^{2}\frac{\varepsilon _{2}^{q+1}}{\varepsilon _{1}^{q+1}} \\
\vdots  & \vdots  & \vdots  & \ddots  & \vdots  & \vdots  \\
0&0&0&\cdots&\frac{\rho \varepsilon _{2}}{\varepsilon _{1}}&B \rho^{2}\frac{\varepsilon _{2}^{q+1}}{\varepsilon _{1}^{q+1}}\\
0 & 0 & 0 &\cdots&0& \frac{\rho ^{2}\varepsilon _{2}^{q+1}}{\varepsilon _{1}^{q+1}}%
\end{array}%
\right)
\]%
maps $\mathcal{H}_{\varepsilon_{1}}^r$ onto $\mathcal{H}_{\varepsilon _{2 }}^r$%
. In particular, $\mathcal{H}_{\varepsilon_{1}}^r$ and $\mathcal{H}_{\varepsilon
_{2 }}^r$ are equivalent.
\end{theorem}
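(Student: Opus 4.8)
The plan is to verify directly that $\xi$ maps $\mathcal{H}_{\varepsilon_1}^r$ onto $\mathcal{H}_{\varepsilon_2}^r$, exploiting the decomposition $\mathcal{H}_{\varepsilon_i}^r=(\mathcal{V}_{\varepsilon_i}^r\setminus\Pi_\infty)\cup\mathcal{H}_{\varepsilon_i,\infty}^r$. First I would record the arithmetic facts underlying the whole computation. Since $\tr_{q^2/q}(\varepsilon_i)=1$ one has $\varepsilon_i^{q+1}=\mathrm{N}_{q^2/q}(\varepsilon_i)=\delta_i$ and $\varepsilon_i^2=\varepsilon_i+\delta_i$; the element $\alpha=\varepsilon_2-\varepsilon_1=\varepsilon_1+\varepsilon_2$ lies in $\GF q$ (its trace is $0$) and satisfies $\alpha^2+\alpha=\delta_1+\delta_2$; and $\sigma\colon z\mapsto z^{2^{(e+1)/2}}$ is an automorphism of $\GF q$ with $z^{\sigma^2}=z^2$, so that $B=(\alpha/\delta_1)^{\sigma/2}\in\GF q$ with $B^2=(\alpha/\delta_1)^{\sigma}$ and $B^{\sigma}=\alpha/\delta_1$, while $\rho=(\delta_1/\delta_2)^{\sigma/2+1}\in\GF q$. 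The displayed matrix is upper triangular with nonzero diagonal, hence represents a projectivity $\xi$; and since its first column is $(1,0,\dots,0)$, the hyperplane $\Pi_\infty\colon X_0=0$ is $\xi$-invariant, so $\xi$ restricts to a bijection of $\AG(r,q^2)$ and to one of $\Pi_\infty$. It therefore suffices to show that $\xi$ sends $\mathcal{V}_{\varepsilon_1}^r\setminus\Pi_\infty$ onto $\mathcal{V}_{\varepsilon_2}^r\setminus\Pi_\infty$ and $\mathcal{H}_{\varepsilon_1,\infty}^r$ onto $\mathcal{H}_{\varepsilon_2,\infty}^r$.

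The part at infinity is immediate: $\xi$ sends $(0,X_1,\dots,X_r)$ to $(0,\,\tfrac{\rho\varepsilon_2}{\varepsilon_1}X_1,\dots,\tfrac{\rho\varepsilon_2}{\varepsilon_1}X_{r-1},\ast)$, so $X_1^{q+1}+\dots+X_{r-1}^{q+1}$ is multiplied by $\mathrm{N}_{q^2/q}(\rho\varepsilon_2/\varepsilon_1)\neq 0$, and the Hermitian cone $\mathcal{H}_{\varepsilon,\infty}^r$, which does not depend on $\varepsilon$, is preserved. For the affine part, reading off the matrix action one gets that the point $(1,x_1,\dots,x_r)$ is mapped to $(1,x_1',\dots,x_r')$ with
\[
x_j'=a\,(x_j+B)\ \ (1\le j\le r-1),\qquad x_r'=f\,x_r+b\sum_{j=1}^{r-1}x_j+d,
\]
where $a=\rho\varepsilon_2/\varepsilon_1$, $f=\rho^2\varepsilon_2^{q+1}/\varepsilon_1^{q+1}=(\delta_1/\delta_2)^{\sigma+1}\in\GF q$, $b=Bf\in\GF q$, and $d=(r-1)fB^{\sigma+2}\varepsilon_2$, whence $d^q+d=(r-1)fB^{\sigma+2}$.

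The heart of the argument is the single-variable identity
\[
\Gamma_{\varepsilon_2}\!\bigl(a\,(x+B)\bigr)=f\,\Gamma_{\varepsilon_1}(x)+b\,(x^q+x)+fB^{\sigma+2}\qquad(x\in\GF{q^2}).
\]
To prove it I would first establish the normal form $\Gamma_{\varepsilon}(u+v\varepsilon)=u^{\sigma+2}+v^{\sigma}+uv$ for $u,v\in\GF q$ (a short direct computation from the definition of $\Gamma_\varepsilon$, using $\varepsilon^2=\varepsilon+\delta$, $\varepsilon^{q+1}=\delta$ and $\tr_{q^2/q}(u+v\varepsilon)=v$). Writing $x=u+v\varepsilon_1$ and using $\varepsilon_2/\varepsilon_1=\tfrac{\delta_2}{\delta_1}+\tfrac{\alpha}{\delta_1}\varepsilon_2$ together with $\varepsilon_1=\alpha+\varepsilon_2$, one finds the $(1,\varepsilon_2)$-coordinates of $x'=a(x+B)$ to be $u'=\lambda(u+B)$ and $v'=\rho\,v+\mu(u+B)$, where $\lambda=\rho\delta_2/\delta_1=(\delta_1/\delta_2)^{\sigma/2}$ and $\mu=\rho\alpha/\delta_1$; here the would-be mixed term in $u'$ drops out because $\tfrac{\delta_2}{\delta_1}\alpha+\tfrac{\alpha}{\delta_1}\delta_2=0$, and the coefficient of $v$ in $v'$ equals $1$ because $\tfrac{\delta_2+\alpha^2+\alpha}{\delta_1}=1$. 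Substituting into $\Gamma_{\varepsilon_2}(x')=(u')^{\sigma+2}+(v')^{\sigma}+u'v'$ and expanding (using additivity of $z\mapsto z^{\sigma}$ and of $z\mapsto z^2$), one compares the coefficients of $u^{\sigma+2}$, $u^{\sigma}$, $u^2$, $v^{\sigma}$, $uv$, $v$ and the constant term with those of $f(u^{\sigma+2}+v^{\sigma}+uv)+bv+fB^{\sigma+2}$; this reduces to the short list $\lambda^{\sigma+2}=f$, $\lambda\rho=f$, $\rho^{\sigma}=f$, $\mu^{\sigma}=fB^2$, $\lambda\mu=fB^{\sigma}$, after which the $u$- and $v$-coefficients and the constant term match automatically. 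Each of these five relations follows from $z^{\sigma^2}=z^2$ on $\GF q$ and the definitions of $\lambda,\mu,\rho,f,B$. This verification — purely computational, but precisely the point where the values of $B$ and $\rho$ are forced — is the main obstacle.

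With the identity in hand, summing it over $j=1,\dots,r-1$ and using $b,f\in\GF q$ gives $\sum_{j=1}^{r-1}\Gamma_{\varepsilon_2}(x_j')=f\sum_{j=1}^{r-1}\Gamma_{\varepsilon_1}(x_j)+b\sum_{j=1}^{r-1}(x_j^q+x_j)+(r-1)fB^{\sigma+2}$, while directly $(x_r')^q+x_r'=f(x_r^q+x_r)+b\sum_{j=1}^{r-1}(x_j^q+x_j)+(r-1)fB^{\sigma+2}$. Hence, after cancelling the common terms and dividing by $f\neq 0$, the condition $(x_r')^q+x_r'=\sum_{j=1}^{r-1}\Gamma_{\varepsilon_2}(x_j')$ is equivalent to $x_r^q+x_r=\sum_{j=1}^{r-1}\Gamma_{\varepsilon_1}(x_j)$; that is, $(1,x_1',\dots,x_r')\in\mathcal{V}_{\varepsilon_2}^r$ if and only if $(1,x_1,\dots,x_r)\in\mathcal{V}_{\varepsilon_1}^r$. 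Since $\xi$ is a bijection of $\AG(r,q^2)$, this yields $\xi(\mathcal{V}_{\varepsilon_1}^r\setminus\Pi_\infty)=\mathcal{V}_{\varepsilon_2}^r\setminus\Pi_\infty$, and together with the infinite part we conclude $\xi(\mathcal{H}_{\varepsilon_1}^r)=\mathcal{H}_{\varepsilon_2}^r$; in particular the two BT quasi-Hermitian varieties are projectively equivalent.
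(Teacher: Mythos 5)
Your proof is correct and follows essentially the same route as the paper's: both split $\mathcal{H}^r_{\varepsilon_1}$ into its affine part and the Hermitian cone at infinity (the latter being independent of $\varepsilon$ and clearly preserved), reduce the affine computation to the normal form $\Gamma_\varepsilon(u+v\varepsilon)=u^{\sigma+2}+v^{\sigma}+uv$, and verify the transformed equation via the relations $\rho^{\sigma}=\rho^{2}\delta_2/\delta_1$, $B^{\sigma}=\alpha/\delta_1$ and $\sigma^{2}=2$. Packaging the verification as the single-variable identity $\Gamma_{\varepsilon_2}(a(x+B))=f\,\Gamma_{\varepsilon_1}(x)+b(x^q+x)+fB^{\sigma+2}$, reduced to five scalar relations, is just a tidier organization of the same term-by-term cancellations the paper carries out with $Re_{X_i}^{\sigma+2}+Re_{X_i}T_{X_i}+T_{X_i}^{\sigma}$.
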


\begin{proof}
Clearly, $\xi $ preserves $\Pi _{\infty }$. Now, let $P$ be any point of $%
\mathcal{H}_{\varepsilon _{1}}^r$. If $P\in \Pi _{\infty }\cap \mathcal{H}%
_{\varepsilon _{1}}^r$ then $P=(0,a_1,\ldots,a_{r-1} ,a_r )$ with $N(a_1)+\cdots+N(a_{r-1})=0$, and hence
\[
P^{\xi }=\left( 0,a_1\frac{\rho \varepsilon _{2}}{\varepsilon _{1}},\ldots,a_{r-1}\frac{\rho \varepsilon _{2}}{\varepsilon _{1}},\mu  \right),
\]%
 with $\mu \in \GF{q^2}$ lies in $\Pi _{\infty }\cap \mathcal{H}_{\varepsilon _{1}}^r=\Pi _{\infty
}\cap \mathcal{H}_{\varepsilon _{2}}^r$ since
$$ N \left(a_1\frac{\rho \varepsilon _{2}}{\varepsilon _{1}} \right)+\cdots+N\left(a_{r-1}\frac{\rho \varepsilon _{2}}{\varepsilon _{1}}\right)=N \left(\frac{\rho \varepsilon _{2}}{\varepsilon _{1}}\right)\left[ N(a_1)+\cdots+N(a_{r-1})\right]=0\text{.}$$

If $P\in \mathcal{H}_{\varepsilon _{1}}^r\setminus \Pi _{\infty }$, then $%
P=(1,a_1,\ldots,a_{r-1} ,a_r )$ with $a_{i}=a_{i,0}+a_{i,1}\varepsilon$ and (unique) $a_{i,0},a_{i,1} \in \GF{q}$ for each $i=1,...,r$. Hence,
\begin{equation}
a_{r,1}=a_{1,0}^{\sigma +2}+a_{1,0}a_{1,1}+a_{1,1}^{\sigma }+\ldots+a_{r-1,0}^{\sigma
+2}+a_{r-1,0}a_{r-1,1}+a_{r-1,1}^{\sigma }\text{,}
\label{agios}
\end{equation}%
and hence $P^{\xi }=(1,X_1,\ldots,X_{r-1},X_r)$ with
\[
X_i=\frac{\rho \varepsilon _{2}}{\varepsilon _{1}}\left( B+a_i\right), i=1,...,r-1, \text{ and } X_r=\frac{%
\rho ^{2}\varepsilon _{2}^{q+1}}{\varepsilon _{1}^{q+1}}\left(
B(a_1+\ldots+a_{r-1}) +a_r\right)+(r-1)B^{\sigma+2}\varepsilon_2 \text{.}
\]
 Since $\delta_1=\varepsilon_1^{q+1}$ we get $1/\varepsilon_1=(1+\varepsilon_1)/\delta_1$ and hence
 $\varepsilon_2/\varepsilon_1=\delta_2/\delta_1+(\alpha/\delta_1) \varepsilon_2$   Thus, 
 $$X_i= \frac{\rho \delta_2}{\delta_1}(B+a_{i,0})+\rho\left[\frac{\alpha}{\delta_1}(B+a_{i,0})+a_{i,1}\right]\varepsilon_2,$$ for all
$i=1,...,r-1$.

Now, $P^{\xi }\in \mathcal{H}_{\varepsilon _{2}}^r$ if and only if \[T(X_r)=
\Gamma _{\varepsilon _{2}}(X_1)+\ldots+\Gamma _{\varepsilon _{2}}(X_{r-1}),\] which is
equivalent to
\[
T_{X_r}=Re_{X_1}^{\sigma +2}+Re_{X_1}T_{X_1}+T_{X_1}^{\sigma }+\ldots +Re_{X_{r-1}}^{\sigma +2}+Re_{X_{r-1}}T_{X_{r-1}}+T_{X_{r-1}}^{\sigma }\text{,}
\]%
where
$
Re_{X_i} =\frac{\rho \delta_2}{\delta_1}(B+a_{i,0})$, 
$T_{X_i}=T(X_i) =\rho\left[\frac{\alpha}{\delta_1}(B+a_{i,0})+a_{i,1}\right]
$ for all
$i=1,...,r-1$, and 
$$T_{X_r}=
T(X_r)=\frac{\delta _{2}}{\delta _{1}}\rho ^{2}\left[
a_{r,1}+B\left( a_{1,1}+ \ldots +a_{r-1,1})+(r-1)B^{\sigma+2} \right)
\right]$$
Indeed, as $\rho^{(\sigma-2)}=\delta_2/\delta_1$, that is $\rho^{\sigma}=\rho^2\frac{\delta_2}{\delta_1}$, we get
\[T(X_r)=\rho ^{\sigma }\left[a_{1,0}^{\sigma +2}+a_{1,0}a_{1,1}+a_{1,1}^{\sigma }+\ldots+a_{r-1,0}^{\sigma
+2}+a_{r-1,0}a_{r-1,1}+a_{r-1,1}^{\sigma }\right]+ \]
\[+\rho ^{\sigma }B\left(
a_{1,1}+ \ldots+a_{r-1,1}\right) %
+\rho^{\sigma}(r-1)B^{\sigma+2}\]%
by (\ref{agios}). Moreover, 
\begin{equation}
Re_{X_i}^{\sigma+2}=(\rho \delta_2/\delta_1)^{\sigma+2}(a_{i,0}^{\sigma+2}+a_{i,0}^2B^{\sigma}+a_{i,0}^{\sigma}B^2+B^{\sigma+2}),
\end{equation}

\begin{equation}
Re_{X_i} T_{X_i}=\alpha(\rho^2 \delta_2/\delta_1^2)(a_{i,0}^2+B^2)+(\rho^2 \delta_2/\delta_1)(a_{i,0}a_{i,1}+a_{i,1}B),
\end{equation}

\begin{equation}
 T_{X_i}^{\sigma}=\rho ^{\sigma}(\alpha/\delta_1)^{\sigma}(a_{i,0}^{\sigma}+B^{\sigma})+\rho^{\sigma} a_{i,1}^{\sigma},
 \end{equation}
 for $i=1,\ldots r-1$.

Thus, given that $(\rho \delta_2 / \delta_1)^{\sigma+2}=\rho^2 \delta_2 / \delta_1=\rho^{\sigma}$, $B^{\sigma}=\alpha/\delta_1$ and  $\sigma^2=2$, we obtain

\[(\rho \delta_2 / \delta_1)^{\sigma+2}(a_{i,0}^{\sigma}B^2+a_{i,0}^2B^{\sigma})+
\rho^2\delta_2 \alpha(a_{i,0}^2+B^2)/\delta_1^2+\rho^{\sigma}\alpha^{\sigma}(a_{i,0}^{\sigma}+B^{\sigma})/\delta_1^{\sigma}=\]
\[\rho^{\sigma}\left[a_{i,0}^{\sigma}\frac{\alpha^{\sigma}}{\delta_1^{\sigma}}+a_{i,0}^2\frac{\alpha}{\delta_1}+a_{i,0}^2\frac{\alpha}{\delta_1} +\frac{\alpha^{\sigma+1}}{\delta_1^{\sigma+1}}+a_{i,0}^{\sigma}\frac{\alpha^{\sigma}}{\delta_1^{\sigma}}+\frac{\alpha^{\sigma+1}}{\delta_1^{\sigma+1}}\right]=0.\]
This implies

$$\Gamma _{\varepsilon _{2}}(X_i)=Re_{X_i}^{\sigma+2}
+Re_{X_i}T_{X_i}+T_{X_i}^{\sigma }=
\rho ^{\sigma }( B^{\sigma
+2}+a_{i,0}a_{i,1}+a_{i,0}^{\sigma +2}+a_{i,1}^{\sigma }+Ba_{i,1})$$ for all $i=1,\ldots,r-1$.
Therefore,
\[
\Gamma _{\varepsilon _{2}}(X_1)+\ldots+\Gamma _{\varepsilon _{2}}(X_{r-1})=
T_{X_r}
\]
hence, $(\mathcal{H}_{\varepsilon_{1} }^r)^{\xi }=\mathcal{H}_{\varepsilon _{2 }}^r$%
, which is the assertion.
\end{proof}

\section{The $3$-dimensional case}
\label{minlinq}
This section is devoted to the analysis of the BT quasi-Hermitian varieties of $PG(3,q^{2})$. In the first part, we focus on some geometric properties of $\cH_{\varepsilon}^3$; in the second part, we use these information to completely determine the collineation group of $\cH_{\varepsilon}^3$.

\subsection{Some geometric properties of $\cH_{\varepsilon}^3$}
Consider the variety $\mathcal{V}^3_{\varepsilon} \subset \PG(3,q^2)$ with affine equation
\begin{equation}\label{eq3:V}
x_3^q+x_3=\Gamma_{\varepsilon}(x_1)+\Gamma_{\varepsilon}(x_2).
\end{equation}
We are going to study the properties  of the lines contained in $\mathcal{V}_{\varepsilon}^3$ and deduce some related properties of the lines contained in the associated BT quasi-Hermitian variety. 
 Let $\Pi_{\infty}$ be the plane at infinity of $\PG(3,q^2)$.
Set $\mathcal{V}_{\varepsilon,\infty}^3:=\mathcal{V}_{\varepsilon}^3 \cap \Pi_{\infty}$.

The following lemma can be found in \cite{ALGS}[Lemma 3.1].
\begin{lemma}\label{lemma:intersection}
 
\[
\mathcal{V}^3_{\varepsilon,\infty}=\begin{cases}
 \ell_{\infty} & \text{ if } e\equiv 1\; mod\, 4\\
    \ell_{\infty}\cup \ell_1 \cup \ell_2 & \text{ if }
    e\equiv 3 \;mod \,4,
\end{cases}
\]
where $\ell_{\infty}:X_0= X_1+X_2=0$ and $\ell_i:X_0=X_1+w_i X_2=0$ for $i=1,2$ and
$w_1$,  $w_2$  are the $(2^\frac{e-1}{2}+1)$-th  roots of unity in $\GF{q^2}$ different from $1$.
\end{lemma}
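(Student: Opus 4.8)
## Proof plan for Lemma \ref{lemma:intersection}

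The plan is to compute directly the set of points at infinity lying on $\mathcal{V}_\varepsilon^3$. A point at infinity has the form $P = (0, X_1, X_2, X_3)$, and it lies on $\mathcal{V}_\varepsilon^3$ precisely when it satisfies the homogenization of the defining equation \eqref{eq3:V}. The first step is therefore to identify the leading (highest-degree) part of the polynomial $x_3^q + x_3 = \Gamma_\varepsilon(x_1) + \Gamma_\varepsilon(x_2)$; since $\Gamma_\varepsilon(x)$ has degree $\sigma + 2 = 2^{(e+1)/2} + 2$ in $x$ (coming from the term $[x + (x^q+x)\varepsilon]^{\sigma+2}$), while the left-hand side has degree $q = 2^e$, the behaviour at infinity is governed entirely by the top-degree form of $\Gamma_\varepsilon(x_1) + \Gamma_\varepsilon(x_2)$. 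So I would first extract, from $\Gamma_\varepsilon(X_1) + \Gamma_\varepsilon(X_2)$, the homogeneous component of degree $\sigma + 2$, call it $F(X_1, X_2)$, and the points at infinity of $\mathcal{V}_\varepsilon^3$ are exactly $\{(0, X_1, X_2, X_3) : F(X_1,X_2) = 0\}$ — in particular $X_3$ is free, so each solution $(X_1 : X_2)$ contributes a whole line through $P_\infty = (0,0,0,1)$.

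The second step is to expand $F(X_1, X_2)$ explicitly. Writing $\varepsilon^{\sigma+2}$ using $\sigma^2 = 2$ and the relation $\varepsilon^2 = \varepsilon + \delta$ (equivalently $\varepsilon^{q+1} = \delta$, $\varepsilon^q = \varepsilon + 1$), the degree-$(\sigma+2)$ part of $[X_i + (X_i^q + X_i)\varepsilon]^{\sigma+2}$ must be isolated; note $(X_i^q + X_i)$ is \emph{not} homogeneous of degree $1$ over $\GF{q^2}$ in the projective sense, so care is needed — actually the cleanest route is to substitute $x_i = X_i/X_0$, clear denominators by multiplying through by $X_0^{\sigma+2}$, and then set $X_0 = 0$. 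Under $X_0 = 0$ the terms $(x_i^q + x_i)$ scale as $X_i^q/X_0^q$ and similar, and only the genuinely top-degree monomials survive. Carrying this out should collapse $F$ to something of the shape $c\,(X_1^{\sigma+2} + X_2^{\sigma+2}) + (\text{mixed terms})$ which, after using $\sigma + 2 \mid$-type factorizations in characteristic $2$, factors as $X_0 = 0$ together with a product of linear forms $X_1 + w X_2$ over the roots of a suitable polynomial. I expect the relevant polynomial to be $T^{2^{(e-1)/2}+1} + 1$ or a close relative, whose roots are the $(2^{(e-1)/2}+1)$-th roots of unity; whether $1$ is the only root in $\GF{q^2}$, or whether two further roots $w_1, w_2$ lie in $\GF{q^2}$, is exactly the dichotomy $e \equiv 1$ vs. $e \equiv 3 \pmod 4$.

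The third step is the arithmetic that produces this dichotomy: one needs $\gcd(2^{(e-1)/2}+1,\ q^2 - 1) = \gcd(2^{(e-1)/2}+1,\ 2^{2e}-1)$. Since $2^{(e-1)/2}+1$ divides $2^{e-1}-1$ (as $(e-1)/2 \mid e-1$), and $2^{e-1}-1 \mid 2^{2e-2}-1$, one checks how this sits inside $2^{2e}-1$; the number of $(2^{(e-1)/2}+1)$-th roots of unity in $\GF{q^2}$ is $\gcd(2^{(e-1)/2}+1, q^2-1)$, and a short computation modulo small cases of $e \bmod 4$ should show this $\gcd$ equals $1$ when $e \equiv 1 \pmod 4$ and equals $3$ when $e \equiv 3 \pmod 4$. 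In the latter case the three roots are $1, w_1, w_2$ with $w_1 w_2 = 1$, $w_1 + w_2 = 1$ (they are the primitive cube roots of unity, consistent with $3 \mid q+1$ when $e$ is odd), giving the lines $\ell_1, \ell_2$. The main obstacle I anticipate is purely computational bookkeeping in step two: correctly extracting the top-degree form through the substitution $\varepsilon^q = \varepsilon + 1$ and verifying that all terms of degree $< \sigma+2$ (and the $X_3$-terms) genuinely vanish at infinity, so that no spurious components or missing components appear. Since the statement is quoted from \cite{ALGS}[Lemma 3.1], I would in fact simply cite that reference and only sketch the verification above, rather than reproduce the full calculation.
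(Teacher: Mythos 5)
The paper does not actually prove this lemma: it simply records it with the citation to \cite{ALGS}[Lemma 3.1], so your closing remark that you would cite that reference and only sketch the verification is exactly what the authors do. Your sketch also lands on the right mechanism (the intersection with $\Pi_{\infty}$ is a union of lines through $P_{\infty}$ cut out by a binary form in $X_1,X_2$ whose roots are $(2^{\frac{e-1}{2}}+1)$-th roots of unity) and the arithmetic in your third step, $\gcd\bigl(2^{\frac{e-1}{2}}+1,\,2^{2e}-1\bigr)=1$ or $3$ according as $e\equiv 1$ or $3\pmod 4$, is correct and is precisely the source of the dichotomy.

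There is, however, a concrete error in your first step that would derail the computation if carried out as written. You assert that $\Gamma_{\varepsilon}(x)$ has degree $\sigma+2=2^{\frac{e+1}{2}}+2$ in $x$; but as a polynomial over $\GF{q^2}$ its leading term comes from $[(1+\varepsilon)x+\varepsilon x^q]^{\sigma+2}$, namely $\varepsilon^{\sigma+2}x^{q(\sigma+2)}$, so the degree is $q(\sigma+2)$, not $\sigma+2$. Since $\sigma+2<q$ for $e\geq 3$, your stated degrees would make the left-hand side $x_3^q$ dominate the homogenization (you also propose clearing denominators by $X_0^{\sigma+2}$, which is the wrong power), and the resulting locus at infinity would be $X_0=X_3=0$ rather than the claimed union of lines. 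With the correct degree the top form is $\varepsilon^{\sigma+2}\bigl((X_1^q)^{\sigma+2}+(X_2^q)^{\sigma+2}\bigr)$, whose reduced zero set in $\Pi_{\infty}$ is determined by $(X_1/X_2)^{q(\sigma+2)}=1$, i.e.\ by $\gcd\bigl(q(\sigma+2),q^2-1\bigr)=\gcd\bigl(2^{\frac{e-1}{2}}+1,2^{2e}-1\bigr)$ values of $X_1/X_2$; this feeds into your step three and yields $\ell_{\infty}$ alone, or $\ell_{\infty}\cup\ell_1\cup\ell_2$, as claimed. So the plan is salvageable, but the degree bookkeeping must be corrected before the homogenization argument is sound.
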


  

  We refer to 
   the points in $\ell_{\infty}$ as
  $P_{\infty}=(0,0,0,1)$ and $L^n_{\infty}=(0,1,1,n)$ with
  $n\in\GF{q^2}$.

\begin{theorem} \label{th30} Let $\mathcal{V}^3_{\varepsilon}$ be the surface of $\PG(3,q^2)$ of
  equation \eqref{eq3:V}. Then the following hold:
  \begin{enumerate}
  \item  any line of $\PG(3,q^2)$ contained in $\mathcal{V}^3_{\varepsilon}$ contains at least one of the points 
   $L^{\varepsilon^{q}\alpha}_{\infty}=(0,1,1,\varepsilon ^{q}\alpha)$ with $\alpha \in \GF{q}$; 
  \item through any point $L^{\varepsilon^{q}\alpha}_{\infty}$,  there
    are exactly $q+1$ coplanar lines contained in $\mathcal{V}^3_{\varepsilon}$, one of these being $\ell_{\infty}$. Each such set of $q+1$ lines is an Hermitian cone $\Pi_0\mathcal{H}(1,q^2)$;
  \item  either $\ell_{\infty}$ or $\ell_{\infty}, \ell_1,\ell_2$ are the unique lines contained in
    $\mathcal{V}^3_{\varepsilon}$ through $P_{\infty}=(0,0,0,1)$ according as $e \equiv 1 \pmod 4$  or $e \equiv 3 \pmod 4$, respectively.
  \end{enumerate}
\end{theorem}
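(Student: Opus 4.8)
The plan is to rewrite the condition ``$\ell\subseteq\mathcal{V}^3_\varepsilon$'' as a polynomial identity in one parameter and to read off the three assertions from its coefficients. First I would simplify $\Gamma_\varepsilon$: writing $x=x_0+x_1\varepsilon$ with $x_0,x_1\in\GF{q}$ and using $\varepsilon^q=1+\varepsilon$, $\varepsilon^2=\varepsilon+\delta$, $\varepsilon^{q+1}=\delta$, one gets $x+(x^q+x)\varepsilon=x_0$, $x^q+x=x_1$, $x^{2q}+x^2=x_1^2$ and $x^{q+1}+x^2=x_0x_1+x_1^2\varepsilon$, whence
\[
\Gamma_\varepsilon(x)=x_0^{\sigma+2}+x_0x_1+x_1^{\sigma}\in\GF{q},
\]
and \eqref{eq3:V} becomes the $\GF{q}$-identity $\tr(x_3)=\Gamma_\varepsilon(x_1)+\Gamma_\varepsilon(x_2)$. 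Now let $\ell\subseteq\mathcal{V}^3_\varepsilon$ be a line. It meets $\Pi_\infty$; if $\ell\subseteq\Pi_\infty$, then by Lemma~\ref{lemma:intersection} $\ell$ is $\ell_\infty$ (and $\ell_\infty$ contains $L^0_\infty=(0,1,1,0)$), or one of $\ell_\infty,\ell_1,\ell_2$. So I may assume $\ell\not\subseteq\Pi_\infty$; write its unique point at infinity as $D=(0,d^{(1)},d^{(2)},d^{(3)})$ and its affine points as $P(s)=(b_1+sd^{(1)},b_2+sd^{(2)},b_3+sd^{(3)})$, $s\in\GF{q^2}$, for a fixed $(b_1,b_2,b_3)\in\ell$.

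Substituting $P(s)$ into $\tr(x_3)=\Gamma_\varepsilon(x_1)+\Gamma_\varepsilon(x_2)$ and putting $s=s_0+s_1\varepsilon$ with $s_i\in\GF{q}$, the left side equals $\tr(b_3)+s_0\tr(d^{(3)})+s_1\tr(\varepsilon d^{(3)})$, which is $\GF{q}$-affine in $(s_0,s_1)$; hence on the right every monomial other than $1,s_0,s_1$ must have zero coefficient. Expanding the two summands $x_0^{\sigma+2}$ of the right side one checks that $s_0^{\sigma+2}$ and $s_1^{\sigma+2}$ occur \emph{only} there, with total coefficients $\sum_{i=1,2}(d^{(i)}_0)^{\sigma+2}$ and $\delta^{\sigma+2}\sum_{i=1,2}(d^{(i)}_1)^{\sigma+2}$; since $\gcd(\sigma+2,q-1)=1$ the map $t\mapsto t^{\sigma+2}$ permutes $\GF{q}$, so these vanish iff $d^{(1)}_0=d^{(2)}_0$ and $d^{(1)}_1=d^{(2)}_1$, i.e. $d^{(1)}=d^{(2)}$; after rescaling, $D=P_\infty$ or $D=L^n_\infty$. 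If $D=P_\infty$ the identity reads $\tr(b_3)+\tr(s)=\Gamma_\varepsilon(b_1)+\Gamma_\varepsilon(b_2)$, impossible since $\tr$ is not constant; this gives (3): the lines of $\mathcal{V}^3_\varepsilon$ through $P_\infty$ are exactly those lying in $\Pi_\infty$, namely $\ell_\infty$ or $\ell_\infty,\ell_1,\ell_2$ by Lemma~\ref{lemma:intersection}, according as $e\equiv1$ or $3\pmod 4$. If $D=L^n_\infty$ with $n=n_0+n_1\varepsilon$, then $d^{(1)}=d^{(2)}=1$, all cross terms acquire the factor $2=0$, and equating the coefficients of $s_0^2$ and of $s_1$ yields $b_{1,0}=b_{2,0}$ and $n_0+n_1=b_{1,0}+b_{2,0}=0$; thus $n_0=n_1$ and $n=n_1\varepsilon^q=\varepsilon^q\alpha$ with $\alpha:=n_1\in\GF{q}$, proving (1). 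The surviving constraints on the base point are $b_{1,0}=b_{2,0}$, $b_{1,1}+b_{2,1}=\alpha$ and $\tr(b_3)=b_{1,0}\alpha+\alpha^{\sigma}$.

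For (2), fix $\alpha\in\GF{q}$. Inside $\Pi_\infty$ the only line of $\mathcal{V}^3_\varepsilon$ through $L^{\varepsilon^q\alpha}_\infty$ is $\ell_\infty$, since $1+w_i\neq0$ keeps $L^{\varepsilon^q\alpha}_\infty$ off $\ell_1,\ell_2$; the affine ones all have direction $(1,1,\varepsilon^q\alpha)$ and a base point obeying the three constraints above, and counting such base points modulo the line-preserving translation $(b_1,b_2,b_3)\mapsto(b_1+s,b_2+s,b_3+s\varepsilon^q\alpha)$ ($s\in\GF{q^2}$) gives exactly $q$ of them, so $q+1$ lines in all. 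Each satisfies the equation of the plane $\pi:X_1+X_2+\alpha\varepsilon X_0=0$: this is immediate for $\ell_\infty$, and for an affine line both its base point (as $b_1+b_2=(b_{1,1}+b_{2,1})\varepsilon=\alpha\varepsilon$) and its point at infinity $(0,1,1,\varepsilon^q\alpha)$ lie on $\pi$; so the $q+1$ lines are coplanar. To identify their union as a Hermitian cone, cut $\pi$ by the line $m$ through $P_\infty$ and the affine point $(0,\alpha\varepsilon,0)$: the affine line with base $(b_1,b_2,b_3)$ meets $m$ in $(0,\alpha\varepsilon,\,b_3+b_1\varepsilon^q\alpha)$, and as the line varies the parameter $f:=b_3+b_1\varepsilon^q\alpha$ runs over the whole coset $\alpha^{\sigma}\varepsilon+\GF{q}$ of $\ker\tr=\GF{q}$; adjoining the foot $P_\infty$ of $\ell_\infty$, the $q+1$ feet form a translate of $\PG(1,q)$, a Baer subline, which in even characteristic is an $\mathcal{H}(1,q^2)$. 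Hence the $q+1$ coplanar lines are the generators of a Hermitian cone with vertex $L^{\varepsilon^q\alpha}_\infty$, as claimed; equivalently, $\ell_\infty\subseteq\mathcal{H}^3_{\varepsilon,\infty}$ gives $\mathcal{V}^3_\varepsilon\cap\pi=\mathcal{H}^3_\varepsilon\cap\pi$, a hyperplane section of the quasi-Hermitian variety $\mathcal{H}^3_\varepsilon$ with $q^3+q^2+1$ points, which together with the cone structure just found is a Hermitian cone.

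The main obstacle is the controlled expansion behind the second paragraph: one must verify that composing $\Gamma_\varepsilon$ with $s\mapsto b_i+sd^{(i)}$ produces the high-degree monomials $s_0^{\sigma+2},s_1^{\sigma+2}$ only from the $x_0^{\sigma+2}$ terms and with exactly the stated coefficients, and then track the remaining vanishing coefficients so that $d^{(1)}=d^{(2)}$, $n_0=n_1$ and the three base-point constraints really fall out; the rest, including the counting and the plane computation in (2), is routine.
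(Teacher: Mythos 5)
Your proof is correct and follows the same overall strategy as the paper's: reduce $\Gamma_\varepsilon(x)$ to $x_0^{\sigma+2}+x_0x_1+x_1^{\sigma}$, substitute a parametrization of the line into the affine equation, and equate coefficients of the resulting polynomial identity in $(s_0,s_1)$. The one genuine difference is how the point at infinity of an affine line is pinned down. The paper first restricts the direction to $\mathcal{V}^3_{\varepsilon,\infty}=\ell_\infty$ or $\ell_\infty\cup\ell_1\cup\ell_2$ via Lemma~\ref{lemma:intersection} and then runs a separate expansion (its \eqref{Dekaexi}--\eqref{Dekaepta}) to exclude directions on $\ell_1,\ell_2$ by forcing $\alpha=1$; you instead allow an arbitrary direction $(0,d^{(1)},d^{(2)},d^{(3)})$ and note that $s_0^{\sigma+2}$ and $s_1^{\sigma+2}$ arise only from the $x_0^{\sigma+2}$ terms, with coefficients $(d^{(1)}_0)^{\sigma+2}+(d^{(2)}_0)^{\sigma+2}$ and $\delta^{\sigma+2}\bigl((d^{(1)}_1)^{\sigma+2}+(d^{(2)}_1)^{\sigma+2}\bigr)$, so that $d^{(1)}=d^{(2)}$ follows at once from bijectivity of $t\mapsto t^{\sigma+2}$ on $\GF{q}$. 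This is cleaner and dispatches the $e\equiv 3\pmod 4$ case with no extra computation, at the cost of not making the geometry of $\mathcal{V}^3_{\varepsilon,\infty}$ explicit; the subsequent specialization to $d^{(1)}=d^{(2)}=1$ recovers exactly the paper's constraints $b_{1,0}=b_{2,0}$, $n=(b_{1,1}+b_{2,1})\varepsilon^{q}$, and your count of $q^3$ admissible base points modulo the $q^2$ translations along the line, together with the transversal-line argument showing the feet satisfy $\tr(f)=\alpha^{\sigma}$ and hence form a Baer subline, matches the paper's explicit description of the Hermitian cone. One caveat you share with the paper: as literally stated, item (1) fails for $\ell_1,\ell_2$ when $e\equiv 3\pmod 4$, since these lie in $\mathcal{V}^3_\varepsilon$ but contain no point $L^{\varepsilon^{q}\alpha}_{\infty}$; both arguments really establish (1) for lines not contained in $\Pi_\infty$, which is the form actually used in Theorem~\ref{th32}.
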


\begin{proof}
Assertion (1) is obvious if  $\ell\in\Pi_{\infty}$. Hence, let $\ell\in\PG(3,q^2)\setminus \Pi_{\infty}$ contained in $\mathcal{V}_{\varepsilon}^3$. Then $\ell$ contains one of the points in   $\mathcal{V}^3_{\varepsilon,\infty}$. Now, let $P(1,a,b,c)\in \mathcal{V}_{\varepsilon}^3\cap\AG(3,q^2)$. So, \[c^q+c=\Gamma_{\varepsilon}(a)+\Gamma_{\varepsilon}(b).\] Suppose that $\ell$ passes through $P$ and $L_{\infty}^n$, that is $\ell$ has the following affine
  parametric equations:
  \[
    \begin{cases}
      x_1=a+t \\
      x_2=b+t \\
      x_3=c+nt
    \end{cases}
  \]
  for $t\in\GF{q^2}$ and $n$ a given element in $\GF{q^2}$.
  This line is contained in $\mathcal{V}_{\varepsilon}^3$ if and only if 
\[[(a+b)+T(a+b)\varepsilon]^2[t+T(t)\varepsilon]^{\sigma}+[(a+b)+T(a+b)\varepsilon]^{\sigma}[t+T(t)\varepsilon]^{2}+T((a+b)t^q)=T(nt)\]
is fulfilled for each $t\in\GF{q^2}$. This yields either $n=0$, $a=b$ and $c \in \GF{q}$, or $n\neq 0$, $a_0=b_0$, $n=a^q+b^q=(a_1+b_1)(1+\varepsilon)$ and $c^q+c=\Gamma_{\varepsilon}(a)+\Gamma_{\varepsilon}(b)$. Therefore, there are $q$ points on the  line $\ell_{\infty}$  which are contained in $q+1$ lines of $\mathcal{V}_{\varepsilon}^3$.  
 The $q$ affine lines through $L_{\infty}^0$ have affine equations
  \[
    \begin{cases}
      x_1=x_2\\
      x_3=\lambda \\
         \end{cases}
  \]
 where $\lambda \in \GF{q}$.
  These $q$ lines, together with the line $\ell_{\infty}$, form a Hermitian cone with vertex  $L_{\infty}^0$ and basis the Baer subline $\{(0,0,0,1)\} \cup \{(1,a,a,\lambda) |\lambda \in \GF{q} \}$ in the plane $x_1+x_2=0$.
  
Next,  consider the affine lines through the point $L_{\infty}^{ \alpha+\alpha \varepsilon}$, $\alpha \in \GF{q}^*$, that are contained in $\mathcal{H}^3_{\varepsilon}$, that is the lines with affine
equations 
\[
    \begin{cases}
      x_1+x_2=a+b \\
    n x_1+ x_3= c+na \\
         \end{cases}
  \]
where $a=a_0+a_1\varepsilon$, $b=a_0+\varepsilon b_1$, $a_1+b_1= \alpha$ and  $c^q+c=\Gamma_{\varepsilon}(a)+\Gamma_{\varepsilon}(b)$.
These lines, together with $\ell_{\infty}$, form a Hermitian cone with vertex $L_{\infty}^n$  and basis the baer subline $\{(0,0,0,1)\} \cup \{(1,a,b,c+\lambda) |\lambda \in \GF{q} \}$ in the plane $x_1+x_2+ \alpha \varepsilon x_0=0$. This proves (1) and (2).

If $e \equiv 3 \pmod 4$ then $\ell$ can also pass through $P$ and a point  $M_{\alpha}^n(0,1,\alpha,n)$ of $\ell_i$ and hence $\ell$ has the following affine  parametric equations:
  \[
    \begin{cases}
      x_1=a+t \\
      x_2=b+\alpha t \\
      x_3=c+nt
    \end{cases}  \]
  for $t\in\GF{q^2}$, $\alpha=w_i$ and $n$ a given element in $\GF{q^2}$.
Thus, $\ell$ is contained in $\mathcal{V}^3_{\varepsilon}$ if and only if 
\begin{equation} \label{eq8}
c^q+c+n^qt^q+n t=\Gamma_{\varepsilon}(a+t)+\Gamma_{\varepsilon}(b+\alpha t)
\end{equation}
for each $t\in \GF{q^2}$.
Write $a=a_0+\varepsilon a_1$, $b=b_0+\varepsilon b_1$, $\alpha=\alpha_0+\varepsilon \alpha_1$ and $t=t_0+\varepsilon t_1$. Then $a+t=a_0+t_0 +\varepsilon (a_1+t_1)$ and $b+\alpha t=(b_0+\alpha_0t_0+\delta \alpha_1t_1)+ \varepsilon (b_1+\alpha_0t_1+\alpha_1(t_0+t_1))$.
We obtain 
\begin{align}
\Gamma_{\varepsilon}(a+t)&= \Gamma_{\varepsilon}(a)+\Gamma_{\varepsilon}(t)+a_0^2t_0^{\sigma}+a_0^{\sigma}t_0^2+a_0t_1+a_1t_0  \label{Dekaexi}\\
\Gamma_{\varepsilon}(b+\alpha t)&= \Gamma_{\varepsilon}(b)+\Gamma_{\varepsilon}(\alpha t)+b_0^2 (\alpha_0t_0+\delta \alpha_1t_1)^{\sigma}+b_0^{\sigma}(\alpha_0t_0+\delta \alpha_1t_1)^2+  \label{Dekaepta}\\
&\phantom{=} b_0(\alpha_0t_1+\alpha_1(t_0+t_1))+b_1(\alpha_0t_0+\delta \alpha_1t_1). \nonumber
\end{align}
Now, substituting \eqref{Dekaexi} and \eqref{Dekaepta} in \eqref{eq8}, and bearing in mind that $\Gamma_{\varepsilon}(a)=a_0^{\sigma+2}+a_0a_1+a_1^{\sigma}$, it is straightforward to check that $\alpha=1$ is the unique admissible value for which \eqref{eq8} is fulfilled for any $t\in\GF{q^2}$, a contradiction.

Finally, suppose that $\ell$ passes through $P$ and $P_{\infty}$, hence its 
  parametric equations are:
\[
    \ell:\begin{cases}
      x_1=a\\
      x_2=b \\
      x_3=c+t
    \end{cases}
  \]
 with $t\in\GF{q^2}$. In this case, $\ell$ meets $\mathcal{V}_{\varepsilon}^3$ in $q$ affine points and assertion (3) follows.
 
 \end{proof}

\begin{theorem} \label{th32} Let $\mathcal{H}^3_{\varepsilon}$ be the BT quasi-Hermitian variety of $\PG(3,q^2)$ described by \eqref{eq:H}. Then the following statements hold: \begin{enumerate} 
\item each affine line contained in $\mathcal{H}^3_{\varepsilon}$ intersects $\ell_{\infty}$ at one among the points
$L^{\varepsilon^{q}\alpha}_{\infty} = (0,1,1,\varepsilon^{q}\alpha)$, where $\alpha \in \GF{q}$;

\item through each point $L^{\varepsilon^{q}\alpha}_{\infty}$, there exist exactly $q+1$ coplanar lines contained in $\mathcal{H}^{3}_{\varepsilon}$, one of which is $\ell_{\infty}$. Each such set of $q+1$ lines forms a Hermitian cone $\Pi_0\mathcal{H}(1,q^2)$; \end{enumerate} \end{theorem}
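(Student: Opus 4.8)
The plan is to derive both statements from Theorem~\ref{th30} by using that, by~\eqref{eq:H}, $\mathcal{H}^3_\varepsilon$ and $\mathcal{V}^3_\varepsilon$ have the same affine part, $\mathcal{H}^3_\varepsilon\cap\AG(3,q^2)=\mathcal{V}^3_\varepsilon\cap\AG(3,q^2)$, and differ only on $\Pi_\infty$, where $\mathcal{H}^3_\varepsilon$ meets $\Pi_\infty$ in the Hermitian cone $\mathcal{H}^3_{\varepsilon,\infty}:X_1^{q+1}+X_2^{q+1}=0$ rather than in $\mathcal{V}^3_{\varepsilon,\infty}$. Two elementary remarks are needed at infinity: each point $L^{\varepsilon^q\alpha}_\infty=(0,1,1,\varepsilon^q\alpha)$, $\alpha\in\GF{q}$, lies on $\mathcal{H}^3_{\varepsilon,\infty}$ since $1^{q+1}+1^{q+1}=0$ in characteristic two; and the line $\ell_\infty:X_0=X_1+X_2=0$ lies on $\mathcal{H}^3_{\varepsilon,\infty}$, because its points satisfy $X_1=X_2$ and hence $X_1^{q+1}+X_2^{q+1}=0$. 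In particular $\ell_\infty\subseteq\mathcal{H}^3_\varepsilon$.

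I would next record the key reduction: an affine line $m$ (a projective line not contained in $\Pi_\infty$) all of whose points lie in $\mathcal{H}^3_\varepsilon$ is automatically contained in $\mathcal{V}^3_\varepsilon$. Indeed, its $q^2$ affine points then lie in $\mathcal{V}^3_\varepsilon\cap\AG(3,q^2)\subseteq\mathcal{V}^3_\varepsilon$, while $\mathcal{V}^3_\varepsilon$ is a surface whose defining form, the homogenisation of~\eqref{eq3:V}, has degree at most $q(\sigma+2)$, and $q(\sigma+2)<q^2$ because $\sigma+2=2^{(e+1)/2}+2<2^{e}=q$ for $e\geq 3$; a line meeting a surface of degree $d<q^2$ in more than $d$ points is contained in it. Granting this, part~(1) is immediate: by Theorem~\ref{th30}(1) the line $m$ passes through one of the points $L^{\varepsilon^q\alpha}_\infty$, and since $m$ meets $\Pi_\infty$ in a single point, that point equals $L^{\varepsilon^q\alpha}_\infty$ for a suitable $\alpha\in\GF{q}$, which lies on $\ell_\infty$.

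For part~(2), fix $\alpha\in\GF{q}$. By Theorem~\ref{th30}(2), the lines of $\mathcal{V}^3_\varepsilon$ through $L^{\varepsilon^q\alpha}_\infty$ are exactly $\ell_\infty$ together with the $q$ affine lines exhibited in its proof, and these $q+1$ lines are coplanar and form a Hermitian cone $\Pi_0\mathcal{H}(1,q^2)$. Each of those $q$ affine lines has its affine part inside $\mathcal{V}^3_\varepsilon\cap\AG(3,q^2)=\mathcal{H}^3_\varepsilon\cap\AG(3,q^2)$ and its point at infinity equal to $L^{\varepsilon^q\alpha}_\infty\in\mathcal{H}^3_{\varepsilon,\infty}$, so it is contained in $\mathcal{H}^3_\varepsilon$; together with $\ell_\infty\subseteq\mathcal{H}^3_\varepsilon$ this gives at least $q+1$ lines of $\mathcal{H}^3_\varepsilon$ through $L^{\varepsilon^q\alpha}_\infty$. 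For the reverse inclusion, let $m\subseteq\mathcal{H}^3_\varepsilon$ be a line through $L^{\varepsilon^q\alpha}_\infty$. If $m\not\subseteq\Pi_\infty$, then by the reduction above $m\subseteq\mathcal{V}^3_\varepsilon$ and $m\cap\Pi_\infty=L^{\varepsilon^q\alpha}_\infty$, so Theorem~\ref{th30}(2) forces $m$ to be one of the $q$ affine lines listed there. If $m\subseteq\Pi_\infty$, then $m\subseteq\mathcal{H}^3_{\varepsilon,\infty}$, which is the cone with vertex $P_\infty=(0,0,0,1)$ over the Hermitian variety $\{(0,\omega,1,0):\omega^{q+1}=1\}$ of the line $X_0=X_3=0$, hence a union of $q+1$ lines through $P_\infty$ meeting pairwise only in $P_\infty$; since $L^{\varepsilon^q\alpha}_\infty\neq P_\infty$, it lies on exactly one of these, namely $\langle P_\infty,(0,1,1,0)\rangle=\ell_\infty$, so $m=\ell_\infty$. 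Thus there are exactly $q+1$ lines of $\mathcal{H}^3_\varepsilon$ through $L^{\varepsilon^q\alpha}_\infty$; being precisely the $q+1$ lines of $\mathcal{V}^3_\varepsilon$ through that point, they are coplanar and constitute the Hermitian cone $\Pi_0\mathcal{H}(1,q^2)$ of Theorem~\ref{th30}(2). The only steps calling for care are the degree estimate underlying ``$m$ affine in $\mathcal{H}^3_\varepsilon$ implies $m\subseteq\mathcal{V}^3_\varepsilon$'' and the identification of the generators of the cone $\mathcal{H}^3_{\varepsilon,\infty}$ through $L^{\varepsilon^q\alpha}_\infty$; the rest is bookkeeping with Theorem~\ref{th30}.
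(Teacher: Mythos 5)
Your proof is correct and follows essentially the same route as the paper, which simply observes that $\mathcal{H}^3_\varepsilon\cap\AG(3,q^2)=\mathcal{V}^3_\varepsilon\cap\AG(3,q^2)$ and $\ell_\infty\subset\mathcal{H}^3_{\varepsilon,\infty}$ and then invokes Theorem~\ref{th30}. The additional details you supply — the degree bound $q(\sigma+2)<q^2$ justifying that a line with all its affine points on $\mathcal{V}^3_\varepsilon$ lies entirely on it, and the identification of $\ell_\infty$ as the unique generator of the cone $\mathcal{H}^3_{\varepsilon,\infty}$ through $L^{\varepsilon^q\alpha}_\infty$ — are accurate and make explicit what the paper leaves implicit.
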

\begin{proof}

We observe that $\mathcal{H}^3_\varepsilon \cap \AG(3,q^2) = \mathcal{V}^3_{\varepsilon} \cap \AG(3,q^2)$ and that $\ell_{\infty} \subset \mathcal{H}^r_{\varepsilon, \infty}$.
Therefore, the result directly follows from Theorem \ref{th30}.
\end{proof}

\subsection{The automorphism group of $\mathcal{H}_{\varepsilon}^3$}
In this section we determine $Aut(\mathcal{H}^{3}_{\varepsilon})$, the full automorphism group of $\mathcal{H}_{\varepsilon}^3$. The symbol $A$ will denote the stabilizer in $Aut(\mathcal{H}_{\varepsilon}^3) \cap \PGL{4}{q^2}$ of $\Pi_{\infty}$.

\begin{lemma}\label{Syl1}
$A=E\left\langle \vartheta \right\rangle$, where $\theta$ is the collineation with associated  matrix
\begin{equation}
\left( 
\begin{array}{cccc}
1 & 0 & 0 & 0 
\\ 
0 & 0 & 1 & 0 \\ 
0 & 1 & 0 & 0 \\ 
0 & 0 & 0 & 1%
\end{array}%
\right)
\end{equation}
In particular, $|A:E|=2$.
\end{lemma}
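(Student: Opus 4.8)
The plan is to show that $A$ — the stabilizer in $\mathrm{Aut}(\mathcal{H}^3_\varepsilon)\cap\PGL{4}{q^2}$ of $\Pi_\infty$ — splits as $E\langle\vartheta\rangle$ of index $2$ over $E$. First I would observe that since $A$ fixes $\Pi_\infty$ and $M_2\equiv 1\pmod p$ forces any Sylow $p$-subgroup to fix the unique incident pair $(P_\infty,\Pi_\infty)$ (Lemma~\ref{SylHigher}), every element of $A$ permutes the $q+1$ Hermitian cones $\Pi_0\mathcal H(1,q^2)$ described in Theorem~\ref{th32}; in particular $A$ preserves the set $m=\{L^{\varepsilon^q\alpha}_\infty:\alpha\in\GF q\}\cup\{P_\infty\}$ of vertices together with $P_\infty$, which is a Baer subline of $\ell_\infty$. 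This gives a homomorphism $A\to\mathrm{P\Gamma L}_2(q)$ acting on $m$; but elements of $A$ are linear over $\GF{q^2}$, so the image lies in $\PGL 2q$. The kernel $K$ of this action is the subgroup of $A$ fixing $\ell_\infty$ pointwise (at least on $m$, hence on all of $\ell_\infty$ since a linear map fixing a Baer subline pointwise fixes the line).

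Next I would pin down $K$ by a direct coordinate computation: an element of $A$ fixing $\Pi_\infty$ and $\ell_\infty$ pointwise has matrix of the shape $\mathrm{diag}$-like upper form, and imposing that it preserves $\mathcal{H}^3_\varepsilon$ — testing it against the affine points $(1,0,0,z)$, $(1,1,0,z)$, $(1,\varepsilon,0,z)$, $(1,0,1,z)$, $(1,0,\varepsilon,z)$ as in the commented-out lemmas of the excerpt — forces the matrix to lie in $E$. (This is essentially Proposition~\ref{P1}'s computation restricted appropriately: the conditions $T(A)=T(B)=0$, then $A=B=0$ using $x_0^2+x_0^\sigma=0$ having no solution outside $\mathrm{Fix}(\sigma)$, collapse everything to $E$.) Hence $K=E$, and $E$ acts sharply transitively on $m\setminus\{P_\infty\}$ by Theorem~\ref{th32}, so $A/E$ embeds in the stabilizer in $\PGL 2q$ of the point $P_\infty$, a group of order $q(q-1)$.

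Then I would rule out any nontrivial element of odd order and any nontrivial unipotent part in $A/E$. First, $E$ is a Sylow $p$-subgroup of $A$: any $p$-element of $A$ lies in a Sylow $p$-subgroup of $\mathrm{Aut}(\mathcal H^3_\varepsilon)$ fixing a unique incident point-hyperplane pair, which must be $(P_\infty,\Pi_\infty)$, so it lies in the full unipotent group of $\PGL 4{q^2}$ fixing that flag intersected with $A$, and the previous paragraph's argument (the content of Proposition~\ref{P1}) shows that intersection is exactly $E$; thus $A/E$ has order prime to $p$, i.e.\ divides $q-1$. For the odd part: a nontrivial element $g$ of $A$ of order dividing $q-1$ fixing $P_\infty$ and acting on $m$ would, together with the transitive action of $E$ on $m\setminus\{P_\infty\}$, act on the remaining $q-1$ cones and on $\ell_\infty$; but $\vartheta$, the transposition $X_1\leftrightarrow X_2$, manifestly preserves the affine equation \eqref{eq3:V} (since $\Gamma_\varepsilon(x_1)+\Gamma_\varepsilon(x_2)$ is symmetric) and the cone at infinity \eqref{eq:Hinf}, hence $\vartheta\in A$, and $\vartheta$ induces an involution on $m$ fixing $P_\infty$ and $L^0_\infty=(0,1,1,0)$ and swapping the others in pairs. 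I would then show by the coordinate analysis that no element of $A$ induces a nontrivial element of odd order on $\ell_\infty$: such an element, normalized to fix $P_\infty$ and some $L^{\varepsilon^q\alpha_0}_\infty$, would have to fix the Baer subline $m$ pointwise modulo the scalar structure and again reduce into $E$. Consequently $A/E$ has order $2$, generated by the image of $\vartheta$, and since $\vartheta\notin E$ (it is not unipotent) we conclude $A=E\langle\vartheta\rangle$ with $|A:E|=2$.

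The main obstacle I expect is the middle step: proving cleanly that the only elements of $A$ centralizing $\ell_\infty$ pointwise lie in $E$, and more generally controlling the $p'$-part of $A/E$. The transitivity of $E$ on $m\setminus\{P_\infty\}$ handles the "unipotent radical downstairs" for free, but the torus part requires genuinely checking that a diagonal-type collineation scaling the $X_1,X_2$-coordinates can preserve $\mathcal{H}^3_\varepsilon$ only trivially — this is where the specific form of $\Gamma_\varepsilon$, the identity $B^\sigma=\alpha/\delta_1$-type relations, and the condition $\mathrm{Fix}(\sigma)\subsetneq\GF q$ (used exactly as in Proposition~\ref{P1}) all come into play, and one must be careful that the nonlinear map $\sigma$ does not secretly allow an extra multiplicative symmetry. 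Once that computation is in hand, the index-$2$ conclusion and the explicit generator $\vartheta$ follow immediately.
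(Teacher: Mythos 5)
There is a genuine gap, and it sits at the centre of your architecture. You claim that the kernel $K$ of the action of $A$ on the Baer subline $m=\{L^{\varepsilon^{q}\alpha}_{\infty}\}\cup\{P_{\infty}\}$ equals $E$, and in the same breath that $E$ acts sharply transitively on $m\setminus\{P_{\infty}\}$; these two statements are incompatible, and in fact $K=E$ fails in both directions. On the one hand $\tau_{\gamma_{1},\gamma_{2},\gamma_{3}}$ sends $L^{n}_{\infty}=(0,1,1,n)$ to $L^{n+(\gamma_{1}+\gamma_{2})\varepsilon^{q}}_{\infty}$, so $E$ does \emph{not} fix $\ell_{\infty}$ pointwise (only the index-$q$ subgroup with $\gamma_{1}=\gamma_{2}$ does), hence $E\not\leq K$. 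On the other hand $\vartheta$, which swaps $X_{1}$ and $X_{2}$, fixes every point $(0,t,t,s)$ of $\ell_{\infty}$, so $\vartheta\in K\setminus E$; your assertion that $\vartheta$ induces a nontrivial involution on $m$ swapping the cones in pairs is simply false. So the quotient you propose to embed in $\mathrm{PGL}_{2}(q)_{P_{\infty}}$ is not $A/E$, and the bookkeeping by which you hope to conclude $|A:E|=2$ does not go through as stated.

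Independently of this, the step that carries all the weight --- showing that no element acting as a nontrivial scaling/mixing on the $X_{1},X_{2}$ coordinates can preserve $\mathcal{H}^{3}_{\varepsilon}$ --- is precisely what you flag as ``the main obstacle'' and never execute. The paper's proof consists almost entirely of that computation: it writes the general matrix of $\varphi\in A$ (constrained only by preservation of $\Pi_{\infty}$, $\ell_{\infty}$ and $P_{\infty}$), evaluates the invariance condition on the point families $(1,0,0,w)$, $(1,0,s,z)$, $(1,0,t\varepsilon,z)$, $(1,s,0,z)$, $(1,t\varepsilon,0,z)$ to obtain the relations \eqref{r1}--\eqref{r8}, solves these to force the matrix into the two-parameter shape with the residual entry $h$, and finally kills $h$ via $h^{\sigma}+h^{2}=0$ (so $h\in\{0,1\}$, using $\sigma^{2}=2$), landing exactly in $E\langle\vartheta\rangle$. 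A repaired version of your group-theoretic frame would still have to contain this computation, so the proposal as written does not constitute a proof; it is a plan whose scaffolding is partly incorrect and whose load-bearing step is missing.
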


\begin{proof}
Let $\varphi \in A$, then $\varphi$ is represented by the (non-singular) matrix
\[M=\begin{pmatrix}
		1 & a & b & c \\
		0 & e+h+g&e &f         \\
		0 &  g&h&i       \\
		0 &0&0 & l
	\end{pmatrix}.
	\]
 since $\varphi$ preserves $\Pi_{\infty}$, $\ell_{\infty}$ and the point $P_{\infty}$.
As $P_{w}=(1,0,0,w) \in \mathcal{H}_{\varepsilon}^3$ with $w \in \GF{q}$, it follows that $P^{\varphi}_{w}=(1,a_0+a_1\varepsilon,b_0+b_1\varepsilon,c_0+wl_0+(c_1w+l_1\varepsilon)) \in  \mathcal{H}_{\varepsilon}^3$, and hence
\begin{equation}\label{aut1}
c_1+wl_1=a_0^{\sigma+2}+a_1^{\sigma}+a_0a_1+b_0^{\sigma+2}+b_1^{\sigma}+b_0b_1
\end{equation}  
Thus $l_{1}=0$, and so $l\in \GF{q}^{\ast}$. 

Each $P_{s,z}=(1,0,s,z)$ with $s \in  \GF{q}$ and $z^q+z=s^{\sigma+2}$ is a point of $\mathcal{H}_{\varepsilon
}^3$, then $P_{s,z}^{\varphi}$ belongs to $\mathcal{H}_{\varepsilon}^3$. Now, taking into account \eqref{aut1}, $P^{\varphi}=(1,a+sg, b+sh, c+is+lz)\in \mathcal{H}_{\varepsilon}^{3}$, one has
\[
c_1+si_1+ls^{\sigma+2}=a_0^{\sigma+2}+a_1^{\sigma}+a_0a_1+b_0^{\sigma+2}+b_1^{\sigma}+b_0 b_1+\]
\[(sg_0)^{\sigma+2}+a_0^{\sigma}s^2g_0^2+a_0^{2}s^{\sigma}g_0^{\sigma}+s^{\sigma}g_1^{\sigma}+a_0sg_1+a_1sg_0+s^2g_0g_1+\]
\[(sh_0)^{\sigma+2}+b_0^{\sigma}s^2h_0^2+b_0^{2}s^{\sigma}h_0^{\sigma}+s^{\sigma}h_1^{\sigma}+b_0sh_1+b_1sh_0+s^2h_0h_1 \]
for each $s \in \GF{q}$. Therefore, we obtain
\begin{equation}\label{r1}
        i_1=a_0g_1+g_0a_1+b_1h_0+b_0h_1
        \end{equation}
   \begin{equation}\label{r2}   
   l=g_0^{\sigma+2}+h_0^{\sigma+2} 
   \end{equation}
              \begin{equation}\label{r3}  
            a_0^2g_0^{\sigma}+b_0^2h_0^{\sigma}+h_1^{\sigma}+g_1^{\sigma}=0
            \end{equation}
        \begin{equation}\label{r4}
      a_0^{\sigma}g_0^{2}+b_0^{\sigma}h_0^{2}+g_0g_1+h_0h_1=0
    \end{equation}
 Similarly, $Q_{t,z}=(1,0,t \varepsilon,z)$ with $t \in  \GF{q}$ and $z^q+z=t^{\sigma}$ is a point of $\mathcal{H}_{\varepsilon
}^{3}$ then $Q_{t,z}^{\varphi}=(1,a+\varepsilon t g,b+\varepsilon th, c+i\varepsilon t+lz) \in \mathcal{H}_{\varepsilon}^{3}$,
leading to 
\begin{equation}\label{r5}
          i_0+i_1=a_0g_1+g_0a_0+\delta a_1g_1+\delta b_1h_1+b_0h_0+b_0h_1
          \end{equation}
   \begin{equation}\label{r6}       
      g_1^{\sigma+2}=h_1^{\sigma+2} \end{equation}
      \begin{equation}\label{r7}
      a_0^2 \delta^{\sigma} g_1^{\sigma}+b_0^2 \delta^{\sigma}h_1^{\sigma}+g_1^{\sigma}+g_0^{\sigma}+h_1^{\sigma}+h_0^{\sigma}=l
      \end{equation}
   \begin{equation}\label{r8}    
      a_0^{\sigma}  \delta^{2} g_1^{2}+b_0^{\sigma} \delta^{2} h_1^{2}+ \delta g_0g_1+ \delta h_0h_1+ \delta g_1^2+ \delta h_1^2=0
     \end{equation}

From \eqref{r6}, we obtain $h_1=g_1$ since $\sigma+2 \in Aut(\mathbb{F}^{\ast}_{q})$.

We are going to prove $f_{1}=g_{1}=0$. Thus, assume $g_{1},f_{1} \neq 0$. 
Note that $(a_0,b_0)\neq (0,0)$ and $a_0\neq b_0$, otherwise $f_0= g_0$ and $l=0$ which is impossible.  
Assume $a_0\neq 0$. From \eqref{r3}, we obtain $g_0^{\sigma}=\frac{b_0^2}{a_0^2}h_0^{\sigma}$, which substituted in \eqref{r4} leads to $h_0^{\sigma}(1+\frac{b_0^2}{a_0^2})(b_0^2h_0^{\sigma}+g_1^{\sigma})=0$, that is to say $h_0^{\sigma}=g_1^{\sigma}/b_0^{2}$. Hence, $h_0=g_1/b_0^{\sigma}$ and $g_0=g_1/a_0^{\sigma}$.

From \eqref{r8} we get $a_0^{\sigma}  \delta g_1+b_0^{\sigma} \delta g_1+h_0+g_0=0$, hence \[\delta g_1(a_0^{\sigma}+b_0^{\sigma})+g_1(1/b_0^{\sigma}+1/a_0^{\sigma})=0.\] Therefore, $\delta=1/(a_0^{\sigma}b_0^{\sigma})$, and hence $l=0$ in \eqref{r7} gives $l=0$, a contradiction. Thus, $h_1=g_1=0$. Moreover, we have  $g_0^2 + h_0^2 +1=0$, and hence $g_0=h_0+1$, from \eqref{r2} and \eqref{r7}.

Now, we are going to prove that $a_0=b_0=0$. Assume $a_0\neq 0$. From $g_1=h_1=0$ and equation  \eqref{r3} we have $a_0^2g_0^{\sigma}+b_0^2 h_0^{\sigma}=0$ and  since $\sigma^2=2$ we obtain $g_0=(b_0/a_0)^{\sigma} h_0$. From 
\eqref{r4} we get $h_0^2 b_0^{\sigma}(a_0^{\sigma}+b_0^{\sigma})=0$, namely $h_0=0$ or  $a_0=b_0$ or $b_0=0$.
If $h_0=0$ then $g_0=0$, impossible. If $a_0=b_0$ then again $g_0=h_0$, which is excluded.
Then $b_0=0$, $g_0=0$ and $h_0=1$. In particular $g=0$ and $d \neq 0$.
By considering the points $P_{s,z}^{\prime}=(1,s,0,z)$ and $Q_{t,z}^{\prime}$ for  $s,t  \in \GF{q}$, and repeating the same argument as above, we get $d_1=e_1=0$ and $a_0^2d_0^{\sigma}+b_0^2e_0^2=a_0d_0^2=0$ which gives a contradiction. Thus $a_0=0$. If $b_0 \neq 0$ this time $h=0$ and  $e\neq 0$. Again by considering the points $P_{s,z}^{\prime}$ and $Q_{t,z}^{\prime}=(1,\varepsilon t, 0, z)$ in $\cH_{\varepsilon}^3$, and repeating the above argument, we obtain $a_0^2d_0^{\sigma}+b_0^2e_0^2=b_0e_0^2=0$ leading to a contradiction. Hence $a_0=b_0=0$, $e_1=d_1=0$, $d_0=e_0+1$, $i_0=i_1=(a_1+b_1)h_0+a_1$, $f_0=f_1=(a_1+b_1)e_0+a_1$. Furthermore, $e_0=h_0+1$ since $\varphi$ preserves the set of points $R=(0,1,t,z)$ with $t^{q+1}=1$. Thus, $\varphi$ is represented by
\[
M=\left( 
\begin{array}{cccc}
1 & \gamma _{1}\varepsilon  & \gamma _{2}\varepsilon  & \gamma _{3}+\left(
\gamma _{1}^{\sigma }+\gamma _{2}^{\sigma }\right) \varepsilon  \\ 
0 & h & h+1 & \left( \left( \gamma _{1}+\gamma _{2}\right) h+\gamma
_{2}\right) \varepsilon ^{q} \\ 
0 & h+1 & h & \left( \left( \gamma _{1}+\gamma _{2}\right) h+\gamma
_{1}\right) \varepsilon ^{q} \\ 
0 & 0 & 0 & 1%
\end{array}
\right)\text{.}
\]
Now, set $\psi=\varphi\tau_{\gamma _{1},\gamma _{2},\gamma _{3}}$. Then $\psi$, which is represented by the matrix
\[
N =\left( 
\begin{array}{cccc}
1 & 0 & 0 & \varepsilon ^{q+1}\left( \gamma _{1}^{2}+\gamma _{2}^{2}\right) 
\\ 
0 & h & h+1 & 0 \\ 
0 & h+1 & h & 0 \\ 
0 & 0 & 0 & 1%
\end{array}%
\right)\text{,} 
\]
must preserve $\cH_{\varepsilon}^3$. Hence, $P^{\psi }=(1,hx+y(h+1),x(h+1)+hy,z+\varepsilon ^{q+1}\left( \gamma
_{1}^{2}+\gamma _{2}^{2}\right) )$ lies in $\mathcal{H}_{\varepsilon }^3$ whenever $P=(1,x,y,z)$ does it. This means
\begin{eqnarray*}
T(z) &=&\Gamma (y+h(x+y))+\Gamma (x+h(x+y))\\
T(z )&=&\Gamma (x)+\Gamma (y)\text{,}
\end{eqnarray*}%
which imply 
\begin{equation}
\Gamma (x)+\Gamma (y)=\Gamma (y+h(x+y))+\Gamma (x+h(x+y))\text{.}  \label{moimoi}
\end{equation}
Choosing, $x=1$, $y=0$ and $z \in \mathbb{F}_{q}$, \eqref{moimoi} implies $1=\Gamma (h)+\Gamma (1+h)=h^{\sigma+2}+(1+h)^{\sigma+2}=1+h^{2}+h^{\sigma}$, and hence $h^{\sigma }+h^{2}=0$. Then $%
\left( h+h^{\sigma }\right) ^{\sigma }=0$ since $\sigma ^{2}=2$, and hence $%
h^{\sigma }=h$. Therefore $h+h^{2}=0$, and hence $%
h=0,1$. Thus, either $\psi=\tau_{0,0,\gamma _{1}^{2}+\gamma _{2}^{2}}$ or $\psi=\tau_{0,0,\gamma _{1}^{2}+\gamma _{2}^{2}}\vartheta$, and hence $\psi \in E\left\langle \vartheta \right\rangle$. So $\varphi=\psi\tau_{\gamma _{1},\gamma _{2},\gamma _{3}} \in E \left\langle \vartheta \right\rangle$. On the other hand, $E\left\langle \vartheta \right\rangle \leq A$ and hence the assertion follows.
\end{proof}

\begin{corollary}\label{C1}
If $E$ is not normal in $A$, then there is $\varphi \in A$ such that $E^{\varphi }\neq E$ and $\Pi _{\infty }^{\varphi }\neq \Pi _{\infty }$.
\end{corollary}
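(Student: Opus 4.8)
The plan is to derive the statement directly from Lemma~\ref{Syl1}, using only that $A$ is a subgroup of $Aut(\mathcal{H}^3_{\varepsilon})\cap\PGL{4}{q^2}$. First I would assume $E$ is not normal in $A$ and choose $\varphi\in A$ with $E^{\varphi}\neq E$; the aim is to show that this same $\varphi$ satisfies $\Pi_{\infty}^{\varphi}\neq\Pi_{\infty}$, which together with $E^{\varphi}\neq E$ is precisely the assertion.

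I would then argue by contradiction: suppose $\Pi_{\infty}^{\varphi}=\Pi_{\infty}$. Then $\varphi$ is a projective collineation preserving $\mathcal{H}^3_{\varepsilon}$ and fixing $\Pi_{\infty}$, so it belongs to the stabiliser of $\Pi_{\infty}$ in $Aut(\mathcal{H}^3_{\varepsilon})\cap\PGL{4}{q^2}$; by Lemma~\ref{Syl1} this stabiliser is exactly $E\langle\vartheta\rangle$. Since $|E\langle\vartheta\rangle:E|=2$, the subgroup $E$ is normal in $E\langle\vartheta\rangle$, hence $E^{\varphi}=E$, contradicting the choice of $\varphi$. Therefore $\Pi_{\infty}^{\varphi}\neq\Pi_{\infty}$, which is what is wanted.

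I do not expect any genuine obstacle here: the whole content is the normality of $E$ in $E\langle\vartheta\rangle$ (it has index $2$) provided by Lemma~\ref{Syl1}, together with the elementary remark that a collineation in $Aut(\mathcal{H}^3_{\varepsilon})\cap\PGL{4}{q^2}$ fixing $\Pi_{\infty}$ already lies in $E\langle\vartheta\rangle$. The only point requiring a little care is bookkeeping between the ambient group $A$, in which normality of $E$ is being tested, and its subgroup $E\langle\vartheta\rangle$ stabilising $\Pi_{\infty}$; one may also recall, as a sanity check on why the conclusion is phrased for $\Pi_{\infty}$, that $\Pi_{\infty}$ is the unique $E$-invariant hyperplane of $\PG(3,q^{2})$, as noted before Lemma~\ref{SylHigher}.
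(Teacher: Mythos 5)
Your proof is correct, but it is not the route the paper takes. The paper's own argument never invokes Lemma~\ref{Syl1}: assuming $\Pi_{\infty}^{\varphi}=\Pi_{\infty}$, it observes that $E$ and $E^{\varphi}$ both induce elations of $\Pi_{\infty}$ with center $P_{\infty}$, asserts that $\left\langle E,E^{\varphi}\right\rangle$ is therefore a group of such elations and hence equals $E$, and derives the contradiction $E^{\varphi}=E$ from that. Your argument instead reads the corollary as a genuine corollary of Lemma~\ref{Syl1}: any $\varphi$ fixing $\Pi_{\infty}$ lies in the stabilizer of $\Pi_{\infty}$ in $Aut(\mathcal{H}_{\varepsilon}^{3})\cap PGL_{4}(q^{2})$, which that lemma identifies with $E\left\langle\vartheta\right\rangle$, where $E$ has index $2$ and is therefore normal. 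What your route buys is rigor and economy: the paper's step ``$\left\langle E,E^{\varphi}\right\rangle$ induces elations with center $P_{\infty}$, hence equals $E$'' is left unjustified (the full group of elations of $\Pi_{\infty}$ with center $P_{\infty}$ is much larger than the group induced by $E$, and one must also control the kernel of the action on $\Pi_{\infty}$), whereas index-$2$ normality is immediate. What the paper's route buys is independence from the full strength of Lemma~\ref{Syl1}, using only the elation structure of $E$. You were also right to flag the bookkeeping on the symbol $A$: as the corollary and Theorem~\ref{Jova} use it, $A$ must mean $Aut(\mathcal{H}_{\varepsilon}^{3})\cap PGL_{4}(q^{2})$ rather than the stabilizer of $\Pi_{\infty}$ (otherwise the hypothesis of the corollary could never hold), and your proof handles that distinction correctly.
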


\begin{proof}
Suppose that $E$ is not normal in $A$. Then there is $\varphi \in A$ such
that $E^{\varphi }\neq E$. Suppose that $\Pi _{\infty }^{\varphi }=\Pi
_{\infty }$, then $\left\langle E,E^{\varphi }\right\rangle $ induces a
group of elations of $\Pi _{\infty }$ having center $P_{\infty }$. Then $\left\langle E,E^{\varphi }\right\rangle =E$, and hence $E^{\varphi }= E$, and we reach a contradiction.
\end{proof}

\begin{theorem}\label{Jova} The  following hold:
\begin{enumerate}
    \item $Aut(\mathcal{H}_{\varepsilon }^3) \cap PGL_{4}(q^{2})=E\left\langle\vartheta \right\rangle$ is a group of order $2q^{3}$;
    \item $Aut(\mathcal{H}_{\varepsilon }^3)$ preserves the triple $(P_{\infty},\ell_{\infty},\Pi_{\infty})$.
\end{enumerate}
\end{theorem}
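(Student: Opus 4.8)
The strategy is to pin down $\ell_{\infty}$ geometrically, then reduce (1) to the induced action on $\ell_{\infty}$ and invoke the subgroup structure of $PSL_{2}(q)$, and finally deduce (2) from (1) by a normality argument.

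First I would show that $\ell_{\infty}$, and a certain Baer subline of it, are invariant under all of $Aut(\cH_{\varepsilon}^{3})$. By \eqref{eq:Hinf} together with Theorems~\ref{th30} and~\ref{th32}, the lines contained in $\cH_{\varepsilon}^{3}$ are exactly the $q+1$ generators of the Hermitian cone $\cH_{\varepsilon,\infty}^{3}$ (all through $P_{\infty}$, lying in $\Pi_{\infty}$, and including $\ell_{\infty}$) together with $q^{2}$ affine lines, $q$ of them through each of the points $L_{\infty}^{\varepsilon^{q}\alpha}$, $\alpha\in\GF{q}$. Using the parametrisations in the proof of Theorem~\ref{th30} one checks that two distinct cone generators meet only at $P_{\infty}$, that the affine lines through a fixed $L_{\infty}^{\varepsilon^{q}\alpha}$ are mutually incident while affine lines through distinct such points are disjoint (along the first, $x_{1}+x_{2}$ is constantly $\alpha\varepsilon$), and that no affine line meets a cone generator other than $\ell_{\infty}$; consequently $\ell_{\infty}$ meets all the other $q^{2}+q$ lines of $\cH_{\varepsilon}^{3}$, whereas every other line of $\cH_{\varepsilon}^{3}$ meets only $q$ of them. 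Since $q^{2}+q>q$, the line $\ell_{\infty}$ is the unique line contained in $\cH_{\varepsilon}^{3}$ that meets every line contained in $\cH_{\varepsilon}^{3}$, so $Aut(\cH_{\varepsilon}^{3})$ fixes $\ell_{\infty}$; the same bookkeeping identifies the $Aut(\cH_{\varepsilon}^{3})$-invariant set $\mathcal{B}$ of points of $\cH_{\varepsilon}^{3}$ that lie on more than one line of $\cH_{\varepsilon}^{3}$ with $\{P_{\infty}\}\cup\{L_{\infty}^{\varepsilon^{q}\alpha}:\alpha\in\GF{q}\}$, which is a Baer subline of $\ell_{\infty}$.

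Next, writing $G=Aut(\cH_{\varepsilon}^{3})\cap PGL_{4}(q^{2})$, I claim $G_{P_{\infty}}=A$: if $g\in G$ fixes $P_{\infty}$ then it permutes the lines of $\cH_{\varepsilon}^{3}$ through $P_{\infty}$, i.e.\ the cone generators, which span $\Pi_{\infty}$, so $g$ fixes $\Pi_{\infty}$ and lies in $A$ by Lemma~\ref{Syl1}. Restriction to $\ell_{\infty}$ then gives a homomorphism $G\to PGL_{2}(q^{2})$ whose image $\widehat{G}$ stabilises the Baer subline $\mathcal{B}$ and hence lies in $PGL_{2}(q)=PSL_{2}(q)$ acting on $\mathcal{B}\cong\PG(1,q)$, with $P_{\infty}$ the point $\infty$. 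Here $E$ acts on $\ell_{\infty}$ as $n\mapsto n+(\gamma_{1}+\gamma_{2})\varepsilon^{q}$ and so induces on $\mathcal{B}$ the full translation group $\widehat{T}$ of $\PG(1,q)$, a Sylow $p$-subgroup of $PGL_{2}(q)$ fixing $\infty$, while $\vartheta$ fixes $\ell_{\infty}$ pointwise; thus $\widehat{A}=\widehat{T}$, and since $\widehat{G}_{\infty}$ is the image of $G_{P_{\infty}}=A$ we get $\widehat{G}_{\infty}=\widehat{T}$. Now $\widehat{G}$ contains the Sylow $p$-subgroup $\widehat{T}$ of $PSL_{2}(q)$, so by Dickson's classification $\widehat{G}$ is contained in the Borel subgroup $N_{PGL_{2}(q)}(\widehat{T})$ (the stabiliser of $\infty$) or equals $PGL_{2}(q)$; the latter would force $\widehat{G}_{\infty}$ to be that Borel subgroup, of order $q(q-1)>q$, against $\widehat{G}_{\infty}=\widehat{T}$. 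Hence $\widehat{G}$ fixes $\infty$, so $\widehat{G}=\widehat{T}$, $G$ fixes $P_{\infty}$, and $G=G_{P_{\infty}}=A=E\langle\vartheta\rangle$ has order $2q^{3}$, which is (1).

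For (2), $N:=Aut(\cH_{\varepsilon}^{3})\cap PGL_{4}(q^{2})=E\langle\vartheta\rangle$ is normal in $Aut(\cH_{\varepsilon}^{3})$; since $E\le N$ and $\Pi_{\infty}$ (resp.\ $P_{\infty}$) is the unique $E$-invariant hyperplane (resp.\ point) of $\PG(3,q^{2})$, it is also the unique $N$-invariant hyperplane (resp.\ point), so $Aut(\cH_{\varepsilon}^{3})$, normalising $N$, must fix both $\Pi_{\infty}$ and $P_{\infty}$; together with the invariance of $\ell_{\infty}$ this yields the triple. The delicate point is that, in contrast with $\ell_{\infty}$, neither $\Pi_{\infty}$ nor $P_{\infty}$ is combinatorially distinguishable inside $\cH_{\varepsilon}^{3}$ — the plane $\Pi_{\infty}$ and the planes of the cones of Theorem~\ref{th30}(2) cut $\cH_{\varepsilon}^{3}$ in projectively equivalent configurations — so their invariance must be extracted from group theory as above; this makes the exact enumeration of the lines of $\cH_{\varepsilon}^{3}$ and their incidences, and the identification $G_{P_{\infty}}=A$, the load-bearing parts of the argument.
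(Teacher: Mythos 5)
Your proof is correct, but it follows a genuinely different route from the paper's. The paper proves (1) and (2) together by a purely group-theoretic contradiction: assuming $E$ is not normal in $Aut(\cH_{\varepsilon}^{3})\cap PGL_{4}(q^{2})$, Corollary~\ref{C1} forces some conjugate $\Pi_{\infty}^{\psi}$ to be one of the $q+1$ secant planes $\Pi_{n_{0}}$ carrying a Hermitian cone; the authors then compare the pointwise stabilizer $K$ of $\Pi_{\infty}$ inside $E$ with its conjugate $K^{\psi}$, which would have to fix affine points of $\cH_{\varepsilon}^{3}$ pointwise, contradicting the semiregular action of $E^{\ast}$ on $\cH_{\varepsilon}^{3}\setminus\Pi_{\infty}$. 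Normality of $E$ then feeds into Lemma~\ref{SylHigher} (every Sylow $p$-subgroup fixes a unique incident point--hyperplane pair) to give (2), and Lemma~\ref{Syl1} to give (1). You instead extract the invariance of $\ell_{\infty}$ and of the Baer subline $\mathcal{B}$ from the incidence structure of the $q^{2}+q+1$ lines of $\cH_{\varepsilon}^{3}$, reduce to the induced action on $\mathcal{B}\cong\PG(1,q)$, and close with Dickson's classification of subgroups of $PSL_{2}(q)$ containing a full Sylow $2$-subgroup; your normality argument for (2) (via $N=E\langle\vartheta\rangle$ normal in the full collineation group and the uniqueness of the $E$-invariant point and hyperplane) is essentially the same closing step as the paper's, just phrased with $N$ in place of $E$. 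Your approach buys independence from Lemma~\ref{SylHigher} and from the counting $M_{2}\equiv 1\pmod p$, and it is arguably more transparent geometrically; its costs are the reliance on Dickson's theorem and on the full enumeration of lines and their mutual incidences, which is specific to $r=3$ and would not generalize as readily as the paper's Sylow-based scheme (which the authors explicitly flag as their intended tool for higher dimensions). One presentational point: Lemma~\ref{Syl1} is stated for the stabilizer of $\Pi_{\infty}$ but its proof uses matrices that also fix $P_{\infty}$ and $\ell_{\infty}$; your argument only ever applies it to elements already known to fix all three, so your use of it is safe (indeed slightly more careful than the paper's own statement).
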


\begin{proof}
Suppose that $E$ is not normal in $A=Aut(\mathcal{H}_{\varepsilon}^3)\cap
PGL_{4}(q^{2})$. Then there is $\psi \in A$ such that $E^{\psi}\neq E$.
Then $\Pi _{\infty }^{\psi }\neq \Pi _{\infty }$ by Corollary \ref{C1} and hence $%
\Pi _{\infty }^{\psi }=$ $\Pi _{n_{0}}$, where $\Pi _{n_{0}}$ is the
hyperplane containing the hermitian $\mathcal{C}_{n_{0}}$ with apex $%
L^{n_{0}}_{\infty}=(0,1,1,n_{0})$, $n_{0}=\varepsilon ^{q}\alpha _{0}$ and $%
\alpha _{0}\in \mathbb{F}_{q}$. Then $\left\vert E^{A}\right\vert \geq q+1$
since there are precisely $q+1$ such Hermitian cones contained in $\mathcal{H%
}_{\varepsilon }^{3}$ by Theorem \ref{th32}(2), $\Pi _{\infty }^{\psi }=L_{n_{0}}$ and $E$ permutes transitively the points in the set $\left\{ L^{\alpha}_{\infty}: \alpha\in 
\mathbb{F}_{q}\right\}$. 

Let $E^{\ast }=\left\{ \varphi _{\gamma _{1},\gamma _{1},\gamma _{3}}\in
E:\gamma _{1},\gamma _{3}\in \mathbb{F}_{q}\right\} $ is a normal subgroup
of $E$ of order $q^{2}$ preserving both $\Pi _{\infty }$ and $\mathcal{C}%
_{\infty }$ as well as $\Pi _{n}$ and $\mathcal{C}_{n}$. In particular, $E^{\ast }$ a subgroup of $E^{\psi }$ fixing $\ell
_{\infty }$ pointwise. On the other hand, the subgroup $H$ of $E^{\psi }$ fixing $\ell
_{\infty }$ pointwise has index $q$ since the order of $E$ is $q^{3}$. Thus $E^{\ast}=H$, and hence $E^{\ast }$ the subgroup of $E^{\psi }$ fixing $\ell
_{\infty }$ pointwise. 

The group $%
K=\left\{ \varphi _{0,0,\gamma _{3}}\in E:\gamma _{3}\in \mathbb{F}%
_{q}\right\} \leq E^{\ast }$ is the kernel of the action of $E$ on $\Pi _{\infty }$. Therefore $K^{\psi }$ fixes $\mathcal{C}%
_{n_{0}}$ pointwise, and hence $K^{\psi }$ fixes $\mathcal{C}%
_{n_{0}}\setminus \ell _{\infty }$ pointwise \ with  $\mathcal{C}%
_{n_{0}}\setminus \ell _{\infty }\subseteq \mathcal{H}_{\varepsilon
}^3\setminus \Pi _{\infty }$. On the other hand, $K^{\psi }$ fixes $\ell
_{\infty }$ pointwise, and so $K^{\psi }\leq E^{\ast }$ since $E^{\ast }$ is
the subgroup of $E^{\psi }$ fixing $\ell _{\infty }$ pointwise. Then $%
E^{\ast }$, and hence $K^{\psi }$, acts semiregularly on $\mathcal{H}%
_{\varepsilon }\setminus \Pi _{\infty }$, which is not the case. Thus, $E$ is a normal subgroup of $Aut(\mathcal{H}_{\varepsilon }^3)$.

Now, let $W$ be any Sylow $2$-subgroup of $Aut(\mathcal{H}_{\varepsilon }^{3})$. Then $E \leq W$ since $E$ is a normal subgroup of $Aut(\mathcal{H}_{\varepsilon }^{3})$, and hence $W$ fixes the incident point-hyperplane pair $(P_{\infty},\Pi_{\infty})$ by Lemma \ref{SylHigher}. Thus, $Aut(\mathcal{H}_{\varepsilon }^3)$ fixes $(P_{\infty},\Pi_{\infty})$, and hence $Aut(\mathcal{H}_{\varepsilon }^3) \cap PGL_{4}(q^{2})=E\left\langle\vartheta \right\rangle=E\left\langle\vartheta \right\rangle$ by Lemma \ref{Syl1}.


\end{proof}

\begin{theorem}\label{Avtos}
    $Aut(\mathcal{H}_{\varepsilon}^3)=E\left\langle \vartheta, \phi \right\rangle$, with
    \[\phi: (x_0, x_1,x_2,x_3) \rightarrow (x_0^2,x_1^2,x_2^2,x_3^2) \left(
\begin{array}{cccc}
1 & 1 & 1 & 0 \\
0 & \delta^{\frac{\sigma}{2}} \varepsilon^q & 0 & \delta^{\frac{\sigma}{2}}\varepsilon^q\\
0 & 0 & \delta^{\frac{\sigma}{2}}\varepsilon^q   & \delta^{\frac{\sigma}{2}}\varepsilon^q \\
0 & 0 & 0 & \delta^{\sigma+1} %
\end{array}\right)\text{,}\]
is a group of order $4eq^{3}$.
\end{theorem}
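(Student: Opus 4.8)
The plan is to realize $\Aut(\mathcal{H}_{\varepsilon}^{3})$ as an extension of the already–determined linear part $E\left\langle\vartheta\right\rangle$ by a cyclic Galois group, and to pin down that extension with the single explicit semilinear map $\phi$. Since every collineation of $\PG(3,q^{2})$ lies in $P\Gamma L_{4}(q^{2})$, we have $\Aut(\mathcal{H}_{\varepsilon}^{3})\leq P\Gamma L_{4}(q^{2})$, and the natural map $P\Gamma L_{4}(q^{2})\to\mathrm{Gal}(\GF{q^{2}}/\GF{2})\cong\ZZ/2e\ZZ$ sending a collineation to its companion field automorphism restricts to an exact sequence
\[
1\longrightarrow \Aut(\mathcal{H}_{\varepsilon}^{3})\cap\PGL{4}{q^{2}}\longrightarrow \Aut(\mathcal{H}_{\varepsilon}^{3})\longrightarrow \Theta\longrightarrow 1,
\]
with $\Theta\leq\ZZ/2e\ZZ$. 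By Theorem~\ref{Jova}(1) the kernel equals $E\left\langle\vartheta\right\rangle$, of order $2q^{3}$; hence $|\Aut(\mathcal{H}_{\varepsilon}^{3})|=2q^{3}\,|\Theta|$ with $|\Theta|\mid 2e$, so in particular $|\Aut(\mathcal{H}_{\varepsilon}^{3})|\leq 4eq^{3}$.

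The core of the argument is to check that $\phi\in\Aut(\mathcal{H}_{\varepsilon}^{3})$. Write $\phi=\xi\circ\Phi$, where $\Phi\colon(x_{0},x_{1},x_{2},x_{3})\mapsto(x_{0}^{2},x_{1}^{2},x_{2}^{2},x_{3}^{2})$ is the Frobenius collineation and $\xi$ is the projectivity represented by the $4\times4$ matrix in the statement. Since $\varepsilon^{2}=\varepsilon+\delta$, the element $\varepsilon^{2}$ again lies in $\GF{q^{2}}\setminus\GF{q}$ and satisfies $(\varepsilon^{2})^{2}+\varepsilon^{2}+\delta^{2}=0$ with $\delta^{2}\neq 1$ and $\tr_{q/2}(\delta^{2})=\tr_{q/2}(\delta)=1$, so $\mathcal{H}_{\varepsilon^{2}}^{3}$ is again a BT quasi-Hermitian variety. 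The identity $\Gamma_{\varepsilon}(t)^{2}=\Gamma_{\varepsilon^{2}}(t^{2})$, immediate from the definition of $\Gamma_{\varepsilon}$, together with the fact that $\mathcal{H}^{3}_{\varepsilon,\infty}$ is independent of $\varepsilon$ and $\Phi$-invariant, shows that $\Phi(\mathcal{H}_{\varepsilon}^{3})=\mathcal{H}_{\varepsilon^{2}}^{3}$. It then remains to verify that $\xi$ maps $\mathcal{H}_{\varepsilon^{2}}^{3}$ onto $\mathcal{H}_{\varepsilon}^{3}$. The existence of \emph{some} projectivity doing this is guaranteed by Theorem~\ref{teofi} applied to the pair $(\varepsilon_{1},\varepsilon_{2})=(\varepsilon^{2},\varepsilon)$; that the particular matrix in the statement works is then checked directly, by substituting $\xi(1,y_{1},y_{2},y_{3})$ into $x_{3}^{q}+x_{3}=\Gamma_{\varepsilon}(x_{1})+\Gamma_{\varepsilon}(x_{2})$ and running the same $\GF{q}$-coordinate manipulation as in the proof of Theorem~\ref{teofi}, using $\Gamma_{\varepsilon}(a)=a_{0}^{\sigma+2}+a_{0}a_{1}+a_{1}^{\sigma}$, $\sigma^{2}=2$ and $\varepsilon^{q+1}=\delta$. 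This is the one genuinely laborious step; everything else is group-theoretic bookkeeping.

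Granting $\phi\in\Aut(\mathcal{H}_{\varepsilon}^{3})$, its companion automorphism is $x\mapsto x^{2}$, which generates $\mathrm{Gal}(\GF{q^{2}}/\GF{2})\cong\ZZ/2e\ZZ$; hence $\Theta=\ZZ/2e\ZZ$ and $|\Aut(\mathcal{H}_{\varepsilon}^{3})|=4eq^{3}$. Finally, $E\left\langle\vartheta,\phi\right\rangle$ is a subgroup of $\Aut(\mathcal{H}_{\varepsilon}^{3})$ containing $E\left\langle\vartheta\right\rangle$, and $E\left\langle\vartheta,\phi\right\rangle\cap\PGL{4}{q^{2}}$ lies between $E\left\langle\vartheta\right\rangle$ and $\Aut(\mathcal{H}_{\varepsilon}^{3})\cap\PGL{4}{q^{2}}=E\left\langle\vartheta\right\rangle$, so it equals $E\left\langle\vartheta\right\rangle$; moreover $\phi$ maps onto a generator of $\ZZ/2e\ZZ$, so the image of $E\left\langle\vartheta,\phi\right\rangle$ in $\ZZ/2e\ZZ$ is all of it. Therefore $|E\left\langle\vartheta,\phi\right\rangle|=2q^{3}\cdot 2e=4eq^{3}=|\Aut(\mathcal{H}_{\varepsilon}^{3})|$, and since $E\left\langle\vartheta,\phi\right\rangle$ is a subgroup of $\Aut(\mathcal{H}_{\varepsilon}^{3})$ of the same finite order, the two coincide. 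Besides the computation in the second paragraph, the only points that need a word of care are that $\phi$ is non-singular, hence a bona fide collineation, and that its companion automorphism is exactly $x\mapsto x^{2}$ and not a proper power of it — both clear from the shape of the matrix. I expect the matrix verification that $\phi$ preserves $\mathcal{H}_{\varepsilon}^{3}$ to be the main obstacle.
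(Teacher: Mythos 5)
Your proposal is correct and reaches the theorem by the same two-step skeleton as the paper --- show $\phi\in\Aut(\mathcal{H}_{\varepsilon}^{3})$, then combine $|\Aut(\mathcal{H}_{\varepsilon}^{3}):\Aut(\mathcal{H}_{\varepsilon}^{3})\cap\PGL{4}{q^{2}}|\mid 2e$ with the fact that the companion automorphism of $\phi$ generates $\mathrm{Gal}(\GF{q^{2}}/\GF{2})$ --- but you organize the crucial first step differently. The paper verifies $\phi(\mathcal{H}_{\varepsilon}^{3})=\mathcal{H}_{\varepsilon}^{3}$ by one direct substitution of $P^{\phi}$ into the affine equation and a page of $\GF{q}$-coordinate algebra, and then separately records $\tau_{\gamma_{1},\gamma_{2},\gamma_{3}}^{\phi}$ and $\vartheta^{\phi}$; you instead factor $\phi=\xi\circ\Phi$, observe via $\Gamma_{\varepsilon}(t)^{2}=\Gamma_{\varepsilon^{2}}(t^{2})$ that the Frobenius $\Phi$ carries $\mathcal{H}_{\varepsilon}^{3}$ onto $\mathcal{H}_{\varepsilon^{2}}^{3}$, and reduce to showing that $\xi$ carries $\mathcal{H}_{\varepsilon^{2}}^{3}$ back onto $\mathcal{H}_{\varepsilon}^{3}$. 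That is a genuinely nicer conceptual framing, since it explains where the linear part of $\phi$ comes from. Two caveats, both of which you partly acknowledge. First, Theorem~\ref{teofi} applied to $(\varepsilon_{1},\varepsilon_{2})=(\varepsilon^{2},\varepsilon)$ gives $\alpha=\delta$, $\delta_{1}=\delta^{2}$, $\rho=\delta^{\sigma/2+1}$, hence diagonal entries $\rho\varepsilon_{2}/\varepsilon_{1}=\delta^{\sigma/2}\varepsilon^{q}$ matching your $\xi$, but its first-row entries $B\rho\varepsilon_{2}/\varepsilon_{1}=\varepsilon^{q}$ and its $(i,3)$ entries $\delta^{\sigma/2+1}$ do \emph{not} match the $1$'s and $\delta^{\sigma/2}\varepsilon^{q}$ in the stated matrix; so Theorem~\ref{teofi} only gives existence of \emph{some} equivalence $\mathcal{H}_{\varepsilon^{2}}^{3}\to\mathcal{H}_{\varepsilon}^{3}$, and the explicit verification for the particular $\xi$ of the statement (or a proof that the two matrices differ by an element of $\Aut(\mathcal{H}_{\varepsilon^{2}}^{3})\cap\PGL{4}{q^{2}}$) cannot be skipped --- this is exactly the computation that occupies most of the paper's proof, and in your write-up it remains a sketch. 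Second, your verification should also cover the points at infinity ($\Phi$ fixes $\mathcal{H}^{3}_{\varepsilon,\infty}$ and $\xi$ must preserve it too), which the paper dispatches with a one-line remark. Your closing order count (kernel $=E\langle\vartheta\rangle$ by Theorem~\ref{Jova}, image all of $\ZZ/2e\ZZ$ because $\phi\mapsto(x\mapsto x^{2})$, so both groups have order $4eq^{3}$) is sound and is essentially the paper's argument in different clothing.
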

\begin{proof}

First, we prove that $\phi$ leaves invariant the set of affine points of $\mathcal{H}^3_\varepsilon$. To this end, consider an affine point $P(1,x,y,z) \in \mathcal{H}^3_\varepsilon$. Then, $z_1=x_0^{\sigma+2}+x_0x_1+x_1^{\sigma}+y_0^{\sigma}+y_0^{\sigma+2}
+y_0y_1+y_1^{\sigma}$ and in particular
\begin{equation}\label{rot1}
z_1^2=x_0^{2(\sigma+2)}+x_0^2x_1^2+x_1^{2\sigma}+y_0^{2\sigma}+y_0^{2(\sigma+2)}
+y_0^2y_1^2+y_1^{2\sigma}
\end{equation}
We also observe that 
\begin{equation}\label{rot2}
\delta^{\sigma+1}z^2=\delta^{\sigma+1}
z_0^{2}+ \delta^{\sigma+2}z_1^2+ \varepsilon \delta^{\sigma+1}z_1^2
\end{equation}
and
\begin{equation}\label{rot3}
1+\delta^{\sigma/2}\varepsilon^q x^2=1+\delta^{\sigma/2}(x_0^{2})+\delta^{\sigma/2} \varepsilon(x_0^2+\delta x_1^2).
\end{equation}
Then $P^{\phi}=(1, 1+\delta^{\sigma/2}\varepsilon^q x^2, 1+\delta^{\sigma/2}\varepsilon^q y^2, \delta^{\sigma/2}\varepsilon^q x^2+\delta^{\sigma/2} \varepsilon^q y^2+\delta^{\sigma+1}z^2)$ with
\[(1+\delta^{\sigma/2}x_0^2)^{\sigma+2}+(1+\delta^{\sigma/2}x_0^2)(x_0^2+\delta x_1^2)\delta^{\sigma/2}+\delta(x_0^2+\delta x_1^2)^{\sigma}+\]
\[(1+\delta^{\sigma/2}y_0^2)^{\sigma+2}+(1+\delta^{\sigma/2}y_0^2)(y_0^2+\delta y_1^2)\delta^{\sigma/2}+\delta(y_0^2+\delta y_1^2)^{\sigma}=\]
\[1+\delta^{\sigma+1}x_0^{2(\sigma+2)}+\delta^{\sigma}x_0^4+\delta x_0^{2\sigma}+\delta^{\sigma/2}x_0^2+\delta^{\sigma}x_0^4+\delta^{\sigma/2+1}x_1^2+\delta^{\sigma+1}x_0^{2}x_1^2+\delta^{\sigma+1}x_1^{2\sigma}+\delta x_0^{2\sigma}+\]
\[1+\delta^{\sigma+1}y_0^{2(\sigma+2)}+\delta^{\sigma}y_0^4+\delta y_0^{2\sigma}+\delta^{\sigma/2}y_0^2+\delta^{\sigma}y_0^4+\delta^{\sigma/2+1}y_1^2+\delta^{\sigma+1}y_0^2y_1^2+\delta^{\sigma+1}y_1^{2\sigma}+\delta y_0^{2\sigma}=\]
\[\delta^{\sigma+1}x_0^{2(\sigma+2)}+\delta^{(\sigma+1)}x_0^2 x_1^2+\delta^{(\sigma+1)}x_1^{2\sigma}+\delta^{\sigma+1}y_0^{2(\sigma+2)}+\delta^{(\sigma+1)}y_0^2 y_1^2+\delta^{(\sigma+1)}y_1^{2\sigma}+\]
\[\delta^{\sigma/2}x_0^2+\delta^{\sigma/2+1}x_1^2+\delta^{\sigma/2}y_0^2+\delta^{\sigma/2+1}y_1^2=\]
\[=\delta^{\sigma+1}z_1^2+\delta^{\sigma/2}x_0^2+\delta^{\sigma/2+1}x_1^2+\delta^{\sigma/2}y_0^2+\delta^{\sigma/2+1}y_1^2. \]

Thus, from \eqref{rot2},\eqref{rot3} we obtain
\[\Gamma_{\varepsilon}(1+\delta^{\sigma/2}\varepsilon^q x^2)+\Gamma_{\varepsilon}(1+\delta^{\sigma/2}\varepsilon^q y^2)=Tr(\delta^{\sigma/2}x^2+\delta^{\sigma/2}y^2+\delta^{\sigma+1}z^2)\text{,}\]
that is $P^{\phi}\in \mathcal{H}^3_\varepsilon$. Similar computations show that $\phi$ leaves invariant the set of  points of 
the Hermitian cone $\mathcal{H}^3_{\varepsilon,\infty}$. Thus, $(\mathcal{H}^3_{\varepsilon})^{\phi}=\mathcal{H}^3_{\varepsilon}$.

Now, observe that $\phi=\alpha\beta$ where $%
\alpha :(x_{0},x_{1},x_{2},x_{3})\rightarrow
(x_{0}^{2},x_{1}^{2},x_{2}^{2},x_{3}^{2})$ and 
\[
\beta _{\delta }=\left( 
\begin{array}{cccc}
1 & 1 & 1 & 0 \\ 
0 & \delta ^{\sigma /2}\varepsilon ^{q} & 0 & \delta ^{\sigma /2}\varepsilon^q\\ 
0 & 0 & \delta ^{\sigma /2}\varepsilon ^{q} & \delta ^{\sigma /2}\varepsilon^q \\ 
0 & 0 & 0 & \delta ^{\sigma +1}%
\end{array}%
\right) 
\]
Thus $\left( \alpha \beta _{\delta }\right)
^{i}=\alpha ^{i}\prod_{j=0}^{2i}\beta _{\delta }^{\alpha ^{j}}$, and hence $2e$ is minimal integer $j$ such that $(\alpha \beta _{\delta })^{j}\in A$. Therefore, $%
2e$ divides $\left\vert Aut(\mathcal{H}_{\varepsilon }^3):A\right\vert $. On
the other hand, $\left\vert Aut(\mathcal{H}_{\varepsilon }^3):A\right\vert $
divides $\left\vert P\Gamma L_{4}(q^{2}):PGL_{4}(q^{2})\right\vert =2e$ and
so $\left\vert Aut(\mathcal{H}_{\varepsilon }^3):A\right\vert =2e$. That is to
say $Aut(\mathcal{H}_{\varepsilon }^3)=\left\langle E^{3},\vartheta ,\phi
\right\rangle $, which has order $4eq^{3}$. Finally, if $\tau_{\gamma_{1},\gamma_{2},\gamma_{3}} \in E^{3}$, one has
\begin{eqnarray*}
\tau_{\gamma_{1},\gamma_{2},\gamma_{3}}^{\phi_{\delta}}&=&\tau_{\delta^{\frac{1}{2}\sigma+1} \gamma _{1}^{2},\delta^{\frac{1}{2}\sigma +1}\gamma _{2}^{2},
\delta ^{\sigma +1}\gamma _{3}^{2}+\delta ^{\frac{1}{2}\sigma+1
}\gamma _{2}^{2}+ \delta ^{\frac{1}{2}\sigma +1}\gamma
_{1}^{2}+\delta^{\sigma+2 }\gamma _{1}^{2\sigma }+\delta^{\sigma+2}\gamma _{2}^{2\sigma}}\\
\vartheta^{\phi_{\delta}}&=&\vartheta \text{.}
\end{eqnarray*}
Thus $Aut(\mathcal{H}_{\varepsilon }^{3})=E^{3}\left\langle \vartheta, \phi \right\rangle$, which is of order $4eq^{3}$.

\end{proof}

\normalsize

\section{Minimal linear codes and some open problems}
 Let $\Omega$ be a set  of
points in $\PG(r, q)$. A linear code $C(\Omega)$ of length  $|\Omega|$ and  dimension $r+1$  can be constructed by taking as its  generator
matrix the one whose columns are the homogeneous coordinates of the points in $\Omega$.

An $[n, r+1, d]$--linear code $C$ is defined to be minimal if all of its codewords are minimal,  meaning that each codeword is uniquely determined, up to a nonzero scalar multiple, by its support.
Minimal codes are relevant in 
applications such as secret-sharing schemes and secure two-party computation; see
\cite{MM, CMP}.

Two codes are considered equivalent if one can be obtained from the other by a combination of coordinate permutations and scalar multiplications. 
This notion allows researchers to classify codes up to equivalence, significantly reducing the complexity of the classification problem. Demonstrating that two codes are equivalent implies that they possess the same structural and combinatorial properties, which is particularly useful when generalizing results or identifying canonical representatives within a family of codes.

In \cite{ALGS}, the authors studied the projective linear code $C=C(\mathcal{V}^r_{\varepsilon})$ induced by the projective system of the $\GF{2^{2e}}$-rational points of the variety 
$\mathcal{V}^r_{\varepsilon} \subset \PG(r,q^2)$, with $r \geq 3$ , defined by the  affine equation \eqref{eq:V}.
They determined their weights and proved that the code $\cC(\mathcal{V}^3_{\varepsilon})$ is a minimal linear code if and  only if $e\equiv 3 \pmod 4$, whereas  $\cC(\mathcal{V}^r_{\varepsilon})$  with $r>3$ is minimal for every odd $e>1$.

 Now, consider the projectivity $\xi $ of $PG(r,q^2)$ as defined in Theorem \ref{teofi}. It is easy to see that $\xi$ fixes 
\[\mathcal{V}_{\varepsilon_1,\infty}^r=\mathcal{V}_{\varepsilon_1}^r \cap \Pi_{\infty}=\mathcal{V}_{\varepsilon_2,\infty}^r.\]
Since $\mathcal{H}^r_{\varepsilon_1} \cap \AG(r,q^2)=\mathcal{V}^r_{\varepsilon_1}\cap \AG(r,q^2)$, the projectivity $\xi$
also maps $\mathcal{V}_{\varepsilon_{1}}^r$ onto $\mathcal{V}_{\varepsilon _{2 }}^r$%
. In particular,  this implies that the codes  $C(\mathcal{V}_{\varepsilon_{1}}^r)$ and $C(\mathcal{V}_{\varepsilon
_{2 }}^r)$ studied in \cite{ALGS} are  equivalent.

\bigskip

A natural direction for further research is the determination of the full automorphism group of the BT quasi-Hermitian varieties in the projective space $\PG(r, q^2)$ for all dimensions $r > 3$. A related question concerns the automorphism groups of the BM quasi-Hermitian varieties in $\PG(r, q^2)$ for $r > 2$ and arbitrary prime powers $q$ defined in \cite{ACK}. Notably, the case $r = 3$ with $q$ even has already been settled in \cite{AGMS}.

\end{document}